\newtheorem{prop}{Proposition}[section]
\newtheorem{rem}{Remark}[section]
\newtheorem{lema}{Lemma}[section]
\newtheorem{defi}{Definition}[section]
\newtheorem{teo}{Theorem}[section]
\newtheorem{eje}{Example}[section]
\newtheorem*{claim*}{Claim}
\DeclareMathSymbol{\varnothing}{\mathord}{AMSb}{"3F}
\renewcommand{\emptyset}{\varnothing}
\def\cK{\EuScript{K}}
\def\R{{\mathbb R}}
\def\N{{\mathbb N}}
\def\Z{{\mathbb Z}}
\def\M{{\mathcal M}}
\title{Phase transitions for suspension flows}
\date{\today}
\author{Godofredo Iommi} \address{Facultad de Matem\'aticas,
Pontificia Universidad Cat\'olica de Chile (PUC), Avenida Vicu\~na Mackenna 4860, Santiago, Chile}
\email{giommi@mat.puc.cl}
\urladdr{http://www.mat.puc.cl/\textasciitilde giommi/}
\author{Thomas Jordan} \address{The School of Mathematics, The University of Bristol, University Walk, Clifton, Bristol, BS8 1TW, UK}
\email{Thomas.Jordan@bristol.ac.uk}
\urladdr{http://www.maths.bris.ac.uk/~matmj}
\begin{document}

\begin{abstract}
This paper is devoted to study thermodynamic formalism for suspension flows defined over countable alphabets. We are mostly interested in  the regularity properties of the pressure function. We establish conditions for the pressure function to be real analytic or to exhibit a phase transition. We also construct an example of a potential for which the pressure has countably many phase transitions.
\end{abstract}

\maketitle
\section{Introduction}

In this paper we wish to study properties of suspension flows defined over countable Markov shifts. Suspension flows over expanding maps provide models for various examples of hyperbolic flows. Indeed, it was shown by Bowen \cite{bo1} and by Ratner \cite{ra} that
axiom A flows defined over compact manifolds can be coded as suspension flows over sub-shifts defined over finite alphabets. This type of coding allows for the proof of a great deal of fundamental results in ergodic theory. For instance, Bowen and Ruelle \cite{br} making use if these techniques studied the ergodic properties of axiom A flows and described in great detail the corresponding thermodynamic formalism.

On the other hand, suspension flows defined over countable Markov shifts can give models for several naturally occurring flows which are not uniformly hyperbolic or that are not defined over compact manifolds.  For example, Bufetov and Gurevich \cite{bg} have recently used such symbolic models for the Teichm\"{u}ller flows to show that the  Masur-Veech measure is the unique measure of maximal entropy. This result was extended by Hamenst\"adt \cite{h} who also used suspension flows over countable Markov shifts to model these  flows. It is also known, at least since the work of Artin \cite{ar}, that the geodesic flow over the modular surface can be coded with this type of symbolic models using the continued fraction map (see \cite{ku,se}). The positive geodesic flow is a restriction of the geodesic flow over the modular surface. In \cite{gk}  Gurevich and Katok   constructed a suspension flow defined over a countable Markov shift that coded this flow. Making use of this model they showed that the entropy of the flow is strictly smaller than one (recall that the entropy of the geodesic flow over the modular surface is one). The case of the geodesic flow on non-compact hyperbolic manifolds with cusps was studied by Babillot and Peign\'e \cite{bp}. In certain cases they obtain a representation that can be coded with a suspension flow over a countable alphabet. This allows for the proof of limit theorems.

The main focus of our study is the thermodynamic formalism associated to suspension flows defined over countable Markov shifts.  The pressure has been defined in this context, with different degrees of generality, in \cite{bi1,jkl, ke,sav}. The variational principle and other properties have been proved (see Section \ref{sec:pre}). However, the regularity properties of the pressure are poorly understood. If $\phi$ is a potential, the pressure function $t \mapsto P_{\Phi}(t \phi)$ is  a convex function, therefore it is differentiable except  in at most a countable set of points. The pressure function is said to have a \emph{phase transition} at the point $t=t_0$ if it is not analytic at that point.  It was shown by Bowen and Ruelle \cite{br} that in the context of suspension flows defined over sub-shifts of finite type on finite alphabets with a H\"older roof function, the pressure function for H\"older potentials is real analytic. That is, there are no phase transitions. The situation in our context is completely different. Indeed, in Theorem \ref{teo1} we establish sufficient conditions for a regular potential so that the corresponding pressure function exhibits a phase transition. We are able to determine exactly when the pressure may have phase transitions. However, we also give an example to show the behaviour can be much more complicated, in particular we construct  an example with countably many phase transitions.

It is worth pointing out that our results and examples hold, in particular, for suspension flows defined over the full-shift on a countable alphabet. In the discrete time case, that is countable Markov shifts, there is a combinatorial obstruction for the existence of phase transitions  for regular potentials. Indeed, for locally H\"older potentials (see sub-section \ref{cms} for precise definition) defined over the full-shift, the pressure function when finite is real analytic (see Theorem \ref{bip} or \cite{mu1, sa2}). The situation is completely different in the continuous time case and in Example \ref{count} we show it is possible for a regular potential to exhibit countably many phase transitions. The situation is similar to the one for multifractal analysis of Gibbs measures studied in \cite{HMU} and \cite{I}. Actually, with the techniques developed in this paper it should be possible to exhibit multifractal spectra with countably many phase transitions.

Our main result, Theorem  \ref{teo1}, holds for suspension flows defined over countable Markov shifts satisfying a combinatorial assumption, namely the BIP condition (see Section \ref{sec:pre} for a precise definition). However, adapting the well known procedure of inducing we are able to deal with systems not having this property. Indeed, in Theorem \ref{ind} we consider suspension flows defined over an arbitrary topologically mixing countable Markov shift and we establish conditions so that the pressure function is real analytic or has one phase transition. In this general setting  checking the condition of the theorem can be more complicated. That depends upon the combinatorics of the Markov shift on the base.

Even though our results are of a symbolic nature we stress that they can be readily applied to some geometric examples. In Section \ref{rem} we consider the positive geodesic flow. We establish conditions that ensure the existence of unique equilibrium measures. Moreover, we are able to prove that under certain assumptions the pressure function is real analytic or it can have one phase transition. A strong motivation for our work is to see how non-compactedness of the flow space can affect the properties of the pressure function and how this differs from the case of Axiom-A flows, where the pressure function is well understood (see \cite{br}).

%pressure function for observables with respect to these suspension flows and when the observables have equilibrium states. In the compact, uniformly hyperbolic case the behaviour of the pressure is well understood, see \cite{pa}. Our aim is to use the work of Sarig and Mauldin-Urba'{n}ski, see \cite{sa1}, \cite{sa2}, \cite{sa3}, \cite{mu1}, \cite{mu}, \cite{mubook} to try and understand the behaviour of the pressure function for suitable H\"{o}lder potentials for suspensions over countable shifts. In a special case we are able to determine exactly when the pressure may have phase transitions.
We set out this paper as follows. In section 2.1 we recall the definition of pressure for Markov shifts defined over countable alphabets. We also state the results of Sarig \cite{sa1,sa2, sa3} and Mauldin-Urba\'{n}ski \cite{mu1, mu, mubook} regarding the behaviour of the pressure for countable Markov systems. Section 2.2 is devoted to define suspension flows and to discuss the relation between the spaces of invariant probability measures of the shift and the flow. The fact that this relation is not as good as in the finite state case is one of the sources of the more complicated behaviour exhibited by suspension flows on countable alphabets. In Section 3 we recall the definition of pressure for suspension flows given in \cite{bi1,jkl, ke,sav} and the properties that it satisfies. In Section 4 we state and prove one of our main results which establishes precise conditions for the existence of phase transitions. Section 5 is devoted to examples, of particular interest is Example \ref{count}  since it exhibits a countable number of phase transitions. Finally, in Section 6 we apply the above results to a geometric example, namely the positive geodesic flow and discuss the inducing procedure in this context.

%and results in \cite{bi1} which relate the analycity of pressure to the existence of equilibrium states. We are then able to state and proof our main theorem in section 5 and illustrate it via examples in section 6 and 7.

%-------------------------------------
\section{Preliminaries} \label{sec:pre}
%-------------------------------------

\subsection{Thermodynamic formalism for countable Markov shifts} \label{cms}

Let $B$ be a transition matrix defined on the alphabet of natural numbers. That is, the entries of the matrix
$B=B(i,j)_{\N \cup \left\{0\right\}  \times \N \cup \{0\} }$  are zeros and ones (with no row and no column
made entirely of zeros). The countable Markov shift $(\Sigma_B, \sigma)$
is the set
\[ \Sigma_B := \left\{ (x_n)_{n \in \N \cup \{0\}} : B(x_n, x_{n+1})=1  \text{ for every } n \in \N  \cup \{0\} \right\}, \]
together with the shift map $\sigma: \Sigma  \to \Sigma $ defined by
$\sigma(x_0, x_1, \dots)=(x_1, x_2,\dots)$.

\begin{rem}
Analogously, we can define a two-sided countable Markov shift by
\[ \Sigma^*:= \left\{ (x_n)_{n \in \Z} : B(x_n, x_{n+1})=1  \text{ for every } n \in \Z \right\}, \]
together with the shift map  $\sigma: \Sigma^* \to \Sigma^*$ defined by $(\sigma x)_n=x_{n+1}$.
\end{rem}

Recall that the space $\Sigma_B$ is equipped with
the topology generated by the cylinder sets
\begin{equation*}
C_{a_0 \cdots a_n}= \{ x\in \Sigma_B: x_i=a_i \ \text{for
$i=0,\ldots,n$}\}.
\end{equation*}

In what follows we always assume $(\Sigma_B, \sigma)$ to be topologically mixing (see \cite[Section 2]{sa1} for a precise definition). When the context is clear we will simply write $(\Sigma, \sigma)$. Given a function $ \phi \colon \Sigma \to \R$ we define
\[ V_{n}(\phi):= \sup \{| \phi(x)- \phi(y)| : x,y\in \Sigma_B, \ x_{i}=y_{i}
\ \text{for $i=0,\ldots,n-1$} \},
\]
where $x=(x_0 x_1 \cdots)$ and $y=(y_0y_1 \cdots)$.  We say that $\phi$ has \emph{summable variation} if $\sum_{n=1}^{\infty} V_n(\phi)<\infty$.  We also say that
$\phi$ is \emph{locally H\"older} if there exist constants $K>0$ and
$\theta\in (0,1)$ such that $V_{n}( \phi) \le K \theta^{n}$ for all
$n\geq 1$. Each of the above properties implies that the function $\phi$ is uniformly continuous. A locally H\"older function has summable variations however neither of these conditions imply that the function is bounded.

The following notion of pressure for (non-compact) countable Markov shifts was introduced by Sarig  \cite{sa1}, generalising previous results by Gurevich \cite{gu1, gu2}.  Note that Mauldin and Urba\'nski \cite{mu1} gave a different definition of pressure for a narrower class of countable Markov shifts, however when both notions are defined they coincide.

\begin{defi} \label{presion}
Let $\phi \colon \Sigma \to \R$ be a function of summable variation. The
\emph{Gurevich pressure} of $\phi$  is defined by
\[
 P(\phi) = \lim_{n \to
\infty} \frac{1}{n} \log \sum_{x:\sigma^{n}x=x} \exp \left(
\sum_{i=0}^{n-1} \phi(\sigma^{i}x)\right)  \chi_{C_{i_{0}}}(x),
\]
where $\chi_{C_{i_{0}}}(x)$ is the characteristic function of the
cylinder $C_{i_{0}} \subset \Sigma$.
\end{defi}
It is possible to show that the limit always exists \cite{sa1}. Moreover, since
$(\Sigma,\sigma)$ is topologically mixing one can show that $P(\phi)$
does not depend on $i_0$.

\begin{rem} \label{formula}
Let $(\Sigma, \sigma)$ be the full-shift on countably many symbols, that is
\[ \Sigma:= \left\{ (x_n)_{n \in \N\cup \{0\}} : x_n \in \N \cup \{0\}\right\}.\]
If $\phi :\Sigma \to \R$ is a locally constant potential, that is
$\phi|C_n:= \log \lambda_n$, then there is a simple formula for the pressure (see, for example, \cite[Example 1]{bi1})
\begin{equation}
P(\phi )= \log \sum_{n=0}^{\infty} \lambda_n.
\end{equation}
\end{rem}

The Gurevich pressure satisfies the following approximation property (see \cite{sa1}),
\begin{teo}[Approximation property] Let $(\Sigma, \sigma)$ be a countable Markov shift and $\phi \colon \Sigma \to \R$ be a function of summable variations.   If
\[
\cK:= \{ K \subset \Sigma : K \ne \emptyset \text{ compact and }
\sigma\text{-invariant}\},
\]
then
\begin{equation}\label{*tan}
P ( \phi) = \sup \{ P_{K}( \phi) : K\in \cK \},
\end{equation}
where $P_{K}( \phi)$ is the classical topological pressure on
$K$;
\end{teo}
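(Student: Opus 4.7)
The plan is to establish both inequalities $P(\phi) \ge \sup_{K \in \cK} P_K(\phi)$ and $P(\phi) \le \sup_{K \in \cK} P_K(\phi)$. A central observation is that the partition function
\[ Z_n(\phi, i_0) := \sum_{\sigma^n x = x,\, x \in C_{i_0}} \exp\Bigl(\sum_{i=0}^{n-1}\phi(\sigma^i x)\Bigr) \]
is super-multiplicative in $n$: concatenating a periodic loop based at $i_0$ of length $n$ with one of length $m$ yields a loop of length $n+m$ at $i_0$, whence $Z_{n+m} \ge Z_n Z_m$. By Fekete's lemma, $P(\phi) = \sup_n \tfrac{1}{n}\log Z_n(\phi,i_0)$, and the same supremum formula holds for the restricted partition functions on the finite subsystems used below.

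For the first inequality, fix $K \in \cK$. Compactness in the cylinder topology forces each coordinate projection of $K$ to be finite, hence $K \subset \Sigma(F) := \{x \in \Sigma : x_n \in F \text{ for all } n\}$ for some finite $F \subset \N \cup \{0\}$ that we may enlarge to contain $i_0$. Since $\Sigma(F)$ is a subshift of finite type on a finite alphabet, classical thermodynamic formalism applies and monotonicity gives $P_K(\phi) \le P_{\Sigma(F)}(\phi) = \lim_n \tfrac{1}{n}\log \sum_{\sigma^n x = x,\, x \in \Sigma(F)} \exp(S_n\phi(x))$. Topological mixing of $(\Sigma,\sigma)$ allows us to reroute any periodic orbit in $\Sigma(F)$ through $C_{i_0}$ via a bridge of bounded length and bounded $\phi$-weight, so this partition function is dominated by $Z_n(\phi,i_0)$ up to a multiplicative constant independent of $n$; passing to $\tfrac{1}{n}\log$ and $n \to \infty$ yields $P_{\Sigma(F)}(\phi) \le P(\phi)$, hence $P_K(\phi) \le P(\phi)$.

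For the reverse inequality, relabel so $i_0 = 0$, let $F_N := \{0,1,\dots,N\}$, and let $K_N := \Sigma(F_N) \in \cK$. Every periodic point at $i_0$ of period $n$ uses only finitely many symbols, so the restricted partition functions $Z_n^{K_N}(\phi,i_0)$ increase in $N$ with supremum $Z_n(\phi,i_0)$ for each fixed $n$. Combining this with the Fekete identity applied on each $K_N$,
\[ P(\phi) = \sup_n \tfrac{1}{n}\log Z_n(\phi,i_0) = \sup_n \sup_N \tfrac{1}{n}\log Z_n^{K_N}(\phi,i_0) = \sup_N \sup_n \tfrac{1}{n}\log Z_n^{K_N}(\phi,i_0) = \sup_N P_{K_N}(\phi), \]
which is at most $\sup_{K \in \cK} P_K(\phi)$, as required.

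The main obstacle is the interchange of suprema in the last display; it is precisely the super-multiplicativity/Fekete identity $P = \sup_n \tfrac{1}{n}\log Z_n$, valid uniformly on each finite-alphabet subsystem, that allows one to replace $\lim_n$ by $\sup_n$ and then swap with $\sup_N$. A secondary technical nuisance is the rerouting argument in the first inequality: when $K$ does not meet $C_{i_0}$, one must use topological mixing to relate its periodic orbits to loops through $C_{i_0}$ at a uniformly bounded exponential cost; this is the same mechanism that guarantees $P(\phi)$ is independent of the choice of $i_0$.
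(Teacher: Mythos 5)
The paper does not actually prove this theorem: it is quoted from Sarig \cite{sa1}, and your argument is in outline exactly Sarig's original one (almost-supermultiplicativity of the loop partition functions, plus exhaustion by the finite-alphabet subsystems $\Sigma(F_N)$), so the strategy is right. One claim, however, is false as written and happens to sit at the crux. For a potential that is merely of summable variations the exact inequality $Z_{n+m}(\phi,i_0)\ge Z_n(\phi,i_0)Z_m(\phi,i_0)$ does not hold: splicing two loops changes every term of the Birkhoff sum, because $\phi$ depends on the whole future. What is true is $Z_{n+m}\ge e^{-c}Z_nZ_m$ with $c=2\sum_{k\ge1}V_k(\phi)<\infty$ (the $i$-th term of the spliced sum differs from the corresponding term of the original sum by at most $V_{n-i}(\phi)$). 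Fekete's lemma must therefore be applied to $\log Z_n-c$, giving $P(\phi)=\sup_n\tfrac1n\bigl(\log Z_n(\phi,i_0)-c\bigr)$. Since the variations of $\phi$ only decrease under restriction, the same constant $c$ works uniformly on every $K_N$, and your interchange of $\sup_n$ and $\sup_N$ goes through verbatim with $\log Z_n$ replaced by $\log Z_n-c$; but you should make this repair explicit, as the unqualified identity $P=\sup_n\tfrac1n\log Z_n$ is what your whole second inequality rests on.

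Two smaller points need attention but do not change the structure. First, $\Sigma(F)$ need not be irreducible, so the asserted identity $P_{\Sigma(F)}(\phi)=\lim_n\tfrac1n\log\sum_{\sigma^nx=x,\ x\in\Sigma(F)}\exp\bigl(\sum_{i=0}^{n-1}\phi(\sigma^ix)\bigr)$ should be justified either by decomposing $\Sigma(F)$ into irreducible components (the wandering part carries no measure, hence no pressure) or by replacing the limit with a limsup and using that periodic points form $(n,\delta)$-separated sets. Second, in the rerouting step you should choose bridge words of one common length for the finitely many symbols of $F$ (topological mixing supplies these), so that distinct period-$n$ orbits of $\Sigma(F)$ map injectively to distinct loops of a single period $n+L$ based at $i_0$, with weight loss bounded by a constant depending only on $F$; here $V_1(\phi)<\infty$ is what bounds $\phi$ on the finitely many bridge cylinders, since $\phi$ itself need not be bounded.
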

Moreover, this notion of pressure satisfies the variational principle (see \cite{mu1,sa1}),
\begin{teo}
Let $(\Sigma, \sigma)$ be a countable Markov shift and $\phi \colon \Sigma \to \R$ be a function of summable variations, then
\[P(\phi)= \sup \left\{ h(\nu) + \int \phi \text{d} \nu : \nu \in \M_{\sigma} \text{ and } - \int \phi \text{d} \nu < \infty \right\},\]
where $ \M_{\sigma} $ denotes the set of $\sigma-$invariant probability measures and $h(\nu)$ denotes the entropy of the measure $\nu$ (for a precise definition see  \cite[Chapter 4]{wa}).
\end{teo}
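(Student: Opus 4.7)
The plan is to prove the two inequalities separately. The upper bound $P(\phi) \leq \sup\{h(\nu) + \int \phi\, d\nu\}$ is the easier one. For any $K \in \cK$, the restriction $\phi|_K$ is continuous (summable variation implies uniform continuity) and bounded (since $K$ is compact), so the classical variational principle for compact symbolic systems (see \cite[Chapter 9]{wa}) gives $P_K(\phi) = \sup\{h(\nu) + \int \phi\, d\nu\}$, with the supremum ranging over $\sigma$-invariant probability measures $\nu$ supported on $K$. Every such $\nu$ lies in $\M_\sigma$ and satisfies $-\int \phi\, d\nu < \infty$, so $P_K(\phi)$ is bounded by the full supremum appearing in the statement. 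Taking $\sup_{K \in \cK}$ and invoking the approximation property \eqref{*tan} yields $P(\phi) \leq \sup\{h(\nu) + \int \phi\, d\nu\}$.

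For the reverse inequality I would use the inducing (Kakutani--Abramov) technique. Given $\nu \in \M_\sigma$ with $-\int \phi\, d\nu < \infty$, choose any state $a$ with $\nu(C_a) > 0$ and let $r_a \colon C_a \to \N$ denote the first-return time. The first-return map $\bar\sigma := \sigma^{r_a}$ makes $(C_a, \bar\sigma)$ topologically conjugate to the full shift on the countable alphabet of first-return words. Writing $\bar\nu := \nu|_{C_a}/\nu(C_a)$ and $\psi := \sum_{j=0}^{r_a-1} \phi \circ \sigma^j$, Abramov's formula and Kac's lemma give
\begin{equation*}
h(\nu) = \nu(C_a)\, h_{\bar\sigma}(\bar\nu), \qquad \int \phi\, d\nu = \nu(C_a) \int \psi\, d\bar\nu, \qquad \int r_a\, d\bar\nu = 1/\nu(C_a).
\end{equation*}

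Two further ingredients finish the argument. First, a direct partition-function computation, using Remark \ref{formula} together with Definition \ref{presion}, establishes the variational principle for the induced full shift $(C_a, \bar\sigma)$. Second, comparing periodic orbits of $\bar\sigma$ with those of $\sigma$ passing through $C_a$ yields the key pressure inequality $P_{\bar\sigma}(\psi - P(\phi)\, r_a) \leq 0$. Combining these with the three displayed identities gives $h_{\bar\sigma}(\bar\nu) + \int \psi\, d\bar\nu - P(\phi) \int r_a\, d\bar\nu \leq 0$, which rearranges into $h(\nu) + \int \phi\, d\nu \leq P(\phi)$. The main obstacle is the second ingredient: one must verify that $\psi$ inherits summable variations (so the induced pressure is well defined) and push the periodic-orbit partition sums through uniformly. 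The assumption $-\int \phi\, d\nu < \infty$ enters precisely here, since it guarantees $\int \psi\, d\bar\nu > -\infty$ and prevents a degenerate $\infty - \infty$ in the final substitution.
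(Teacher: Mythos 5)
You should first be aware that the paper does not prove this theorem: it is quoted from Sarig \cite{sa1} and Mauldin--Urba\'nski \cite{mu1}, where the hard inequality $h(\nu)+\int\phi\,\text{d}\nu\le P(\phi)$ is obtained by a Misiurewicz-type argument run directly on $\Sigma$ with the countable cylinder partition. Your first half is correct and is exactly the standard argument: a compact invariant $K$ only involves finitely many symbols, the classical variational principle applies there, every measure supported on $K$ is admissible in the supremum, and \eqref{*tan} finishes. The issues are all in your second half.

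The main gap is that your ``first ingredient'' is not a direct computation: Remark \ref{formula} only covers locally constant potentials on the full shift, whereas $\psi-p\,r_a$ is not locally constant, and the inequality $h_{\bar\sigma}(\bar\nu)+\int(\psi-p\,r_a)\,\text{d}\bar\nu\le P_{\bar\sigma}(\psi-p\,r_a)$ for the countable full shift is essentially the statement you are trying to prove (one must handle a countable partition of possibly infinite entropy via the countable Gibbs inequality $\sum_ip_i(a_i-\log p_i)\le\log\sum_ie^{a_i}$ and then compare $\sum_w\sup_{[w]}e^{S_n(\cdot)}$ with the periodic-orbit sums). Inducing relocates the difficulty to the full shift rather than removing it; this can be made to work, since on the full shift every word closes up into a periodic orbit, but that argument is the heart of the proof and is missing. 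Second, your key inequality fails as stated at the critical parameter: grouping $\bar\sigma$-periodic points by their $\sigma$-period $N$ gives $\bar Z_n(\psi-p\,r_a)\le\sum_{N\ge n}e^{-pN}Z_N(\phi,a)$, and since superadditivity yields $Z_N(\phi,a)\le Ce^{NP(\phi)}$, this bound diverges at $p=P(\phi)$. The fix is to prove $P_{\bar\sigma}(\psi-p\,r_a)\le-(p-P(\phi))<0$ for every $p>P(\phi)$ and let $p\downarrow P(\phi)$ only at the very end, which is legitimate precisely because Kac gives $\int r_a\,\text{d}\bar\nu=1/\nu(C_a)<\infty$. Two smaller points: summable variations is \emph{not} automatically inherited, since $V_n(\psi)\le\sum_{k\ge n}V_k(\phi)$ only gives $\sum_nV_n(\psi)\le\sum_kkV_k(\phi)$, which may be infinite, so Definition \ref{presion} need not apply to $\psi$ without further argument (harmless for locally H\"older $\phi$, and the estimates really only need a uniform distortion constant, but it must be said); and Kac's formula in the form you use requires $\nu$ ergodic, so you should first reduce to ergodic measures by the ergodic decomposition.
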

A measure $\nu \in \M_{\sigma}$ attaining the supremum, that is, $P(\phi)= h(\nu) + \int \phi \text{d} \nu$ is called \emph{equilibrium measure} for $\phi$. Buzzi and Sarig \cite{busa} proved that a potential of summable variations has at most one equilibrium measure.

We say that $\mu \in  \M_\sigma$ is a \emph{Gibbs measure}  for the
function $\phi \colon \Sigma \to \R$ if for some constants $P$,
$C>0$ and every $n\in \N$ and $x\in C_{a_0 \cdots a_n}$ we have
\[
\frac{1}C \le \frac{\mu(C_{a_0\cdots a_n})}{\exp (-nP + \sum_{i=0}^n
\phi(\sigma^k x))} \le C.
\]

 We say that a countable Markov shift $(\Sigma_B, \sigma)$, defined by the transition matrix $B(i,j)$ with $(i,j)\in \N \cup\{0\} \times \N \cup\{0\}  $, satisfies the \emph{BIP condition} if and only if  there exists $\{b_1 , \dots , b_n\}  \in \N \cup\{0\} $ such that for every $a \in \N \cup\{0\} $ there exists $i,j \in \N$ with $B(b_i, a)B(a,b_j)=1$. For this class of countable Markov shifts, introduced by Sarig \cite{sa2}, the thermodynamic formalism  is similar to that of sub-shifts of finite type defined on finite alphabets.
 The following theorem summarises results proven by Sarig in \cite{sa1,sa2} and by Mauldin and Urba\'{n}ski, \cite{mu1}.

\begin{teo} \label{bip}
Let $(\Sigma, \sigma)$ be a countable Markov shift satisfying the BIP condition and $\phi: \Sigma \to \R$ a locally H\"older potential. Then, there exists $t^* >0$ such that pressure function $t \to P(t\phi)$ has the following properties
\begin{equation*}
P(t \phi)=
\begin{cases}
\infty  & \text{ if  } t  < t^* \\
\text{real analytic } & \text{ if  } t > t^*.
\end{cases}
\end{equation*}
Moreover, if $t> t^*$, there exists a unique equilibrium measure for $t \phi$.
If $\sum_{n=1}^{\infty} V_{n}( \phi) < \infty$ and $P(\phi)< \infty$ then there exists a Gibbs measure for $\phi$.
\end{teo}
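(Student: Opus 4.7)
The plan is to proceed via the Ruelle transfer operator $L_{\phi} f(x) = \sum_{\sigma y = x} e^{\phi(y)} f(y)$ combined with analytic perturbation theory, following the scheme developed by Sarig and by Mauldin-Urba\'nski. First I set $t^{*} := \inf \{ t \in \R : P(t \phi) < \infty \}$; this is well-defined because, by the definition of Gurevich pressure, the function $t \mapsto P(t \phi)$ is a supremum of affine functions of $t$ (via the variational principle) and hence convex, so its effective domain is an interval. To upgrade this to a right half-line on a BIP shift, I would analyse the one-step partition sum $Z_{1}(t \phi, [a]) = \sum_{\sigma x = x,\, x_{0}=a} \exp(t \phi(x))$; because $\phi$ is locally H\"older and the BIP condition forces all symbols to be communicated through a finite set, a Dirichlet-series style monotonicity argument in $t$ gives finiteness of the partition sum for all $t > t^{*}$ and divergence for all $t < t^{*}$, which translates to the required dichotomy for $P(t \phi)$.

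For real analyticity on $(t^{*}, \infty)$, I would apply Sarig's generalised Ruelle-Perron-Frobenius theorem: for each such $t$, the operator $L_{t \phi}$ acts on a Banach space $\mathcal{B}$ of locally H\"older functions (equipped with a norm combining supremum and H\"older-oscillation control) and admits $e^{P(t \phi)}$ as a simple isolated leading eigenvalue with spectral gap, through a Lasota-Yorke / Doeblin-Fortet inequality in which the BIP property is used precisely to control the non-compact tail of the operator. The family $t \mapsto L_{t \phi}$ is analytic into the space of bounded operators on $\mathcal{B}$, since $e^{t \phi}$ is entire in $t$ pointwise and the associated multiplication operators vary analytically in norm on the finiteness domain. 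Kato's analytic perturbation theorem for isolated simple eigenvalues then gives that $\lambda(t) = e^{P(t \phi)}$ depends analytically on $t$, hence $P(t \phi) = \log \lambda(t)$ is real analytic for $t > t^{*}$.

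For the existence of a Gibbs measure under the weaker hypothesis $\sum V_{n}(\phi) < \infty$ with $P(\phi) < \infty$, the generalised RPF theorem also produces a conformal eigenmeasure $\nu$ with $L_{\phi}^{*} \nu = e^{P(\phi)} \nu$ and a strictly positive eigenfunction $h \in \mathcal{B}$ with $L_{\phi} h = e^{P(\phi)} h$; the measure $d\mu = h \, d\nu$ is then checked directly from the definitions to be $\sigma$-invariant and to satisfy the Gibbs bounds on every cylinder. Uniqueness of the equilibrium measure for $t \phi$ when $t > t^{*}$ follows immediately from the Buzzi-Sarig uniqueness result already recalled in the text applied to the locally H\"older potential $t \phi$.

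The main technical obstacle is the construction of the Banach space $\mathcal{B}$ and the Lasota-Yorke inequality on it: one must choose the H\"older norm carefully so that a single (or finitely many) iterates of $L_{t \phi}$ admit a decomposition into a strict contraction of the oscillation seminorm plus a compact perturbation, and it is exactly at this step that BIP enters in an essential way, by guaranteeing a finite set of states through which all orbits can be steered and hence enabling the uniform estimates that fail on arbitrary countable Markov shifts.
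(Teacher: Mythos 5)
The paper does not actually prove this theorem --- it is stated explicitly as a summary of results of Sarig \cite{sa1,sa2} and Mauldin--Urba\'nski \cite{mu1} --- and your outline (Ruelle operator with spectral gap on a Banach space of locally H\"older functions, BIP entering through the Lasota--Yorke/Doeblin--Fortet estimate, Kato perturbation for analyticity of the leading eigenvalue, RPF eigendata $h\,d\nu$ for the Gibbs measure, and Buzzi--Sarig for uniqueness) is precisely the route taken in those references, so your proposal is essentially the (cited) proof. The one point to watch is that the dichotomy at a single finite $t^*>0$ with finiteness on the whole ray $(t^*,\infty)$ does not follow from convexity alone (which only gives an interval); your one-step partition-sum monotonicity argument implicitly uses that $\sup\{\phi(x):x\in C_a\}\to-\infty$, a hypothesis that the theorem's formulation tacitly assumes and that holds in the applications ($\phi=-\tau$ with $\tau$ unbounded above) made later in the paper.
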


\subsection{Suspension flows and invariant measures}

Let $(\Sigma, \sigma)$ be a countable Markov shift and $\tau \colon \Sigma \to \R^+$ be a positive continuous function such that for every  $x\in\Sigma$ we have
$\sum_{i=0}^{\infty}\tau(\sigma^i x)=\infty$. Consider the space
\begin{equation}\label{shift}
Y= \{ (x,t)\in \Sigma  \times \R \colon 0 \le t \le\tau(x)\},
\end{equation}
with the points $(x,\tau(x))$ and $(\sigma(x),0)$ identified for
each $x\in \Sigma $. The \emph{suspension semi-flow} over $\sigma$
with \emph{roof function} $\tau$ is the semi-flow $\Phi = (
\varphi_t)_{t \ge 0}$ on $Y$ defined by
\[
 \varphi_t(x,s)= (x,
s+t) \ \text{whenever $s+t\in[0,\tau(x)]$.}
\]
In particular,
\[
 \varphi_{\tau(x)}(x,0)= (\sigma(x),0).
\]
In the case of two-sided Markov shifts we can define a suspension
flow $(\varphi_t)_{t\in \R}$ in a similar manner.

We denote by $\M_\Phi$ the space of $\Phi$-invariant probability
measures on $Y$. Recall that a measure $\mu$ on $Y$ is
\emph{$\Phi$-invariant} if $\mu(\varphi_t^{-1}A)= \mu(A)$ for every
$t \ge 0$ and every measurable set $A \subset Y$. We also consider
the space $\M_\sigma$ of $\sigma$-invariant probability measures on
$\Sigma $.  There is a strong relation between these two spaces of invariant measures.
Indeed, consider the space of $\sigma-$invariant measures for which $\tau$ is integrable,\begin{equation}
\M_\sigma(\tau):= \left\{ \mu \in \mathcal{M}_{\sigma}: \int \tau \text{d} \mu < \infty \right\}.
\end{equation}
Denote by $m$ the one dimensional Lebesgue measure and let $\mu \in \M_\sigma(\tau)$ then  it follows directly from classical results by Ambrose and Kakutani \cite{ak} that
\[(\mu \times m)|_{Y} /(\mu \times m)(Y) \in \M_{\Phi}.\]
The  behaviour of the map  $R \colon \M_\sigma \to \M_\Phi$, defined by
\begin{equation} \label{R}
R(\mu)=(\mu \times m)|_{Y} /(\mu \times m)(Y)
\end{equation}
 is closely related to the ergodic properties of the flow. Indeed, in the compact setting the map $R \colon \M_\sigma \to \M_\Phi$ is a bijection. This fact was used by Bowen and Ruelle \cite{br} to study and develop the thermodynamic formalism for Axiom A flows (these flows admit a compact symbolic representation). In
the general (non-compact) setting there are several difficulties that can arise. For instance,  the roof function $\tau$ need not to be bounded above. It is, therefore, possible for a measure $\nu \in \M_\sigma$ to be such that $\int \tau \text{d} \nu  = \infty$. In this situation the measure $\nu \times m$ is an infinite invariant measure for $\Phi$. Hence, the map $R(\cdot)$ is not well defined and this makes it harder to reduce the study of the thermodynamic formalism of the flow $\Phi$ to that of the shift $\sigma$. Another possible complication occurs if the roof function $\tau$ is not bounded away from zero. Then it is possible that for an infinite (sigma-finite) $\sigma-$invariant measure $\nu$  we have $\int  \tau\text{d}\nu <\infty$. In this case the measure $(\nu \times m)|_{Y} /(\nu \times m)(Y) \in \M_\Phi$. In such a situation,   the map $R$ is not surjective. Again, the fact that $R$ is not a bijective map makes it hard to translate problems from the flow to the shift.

The following can be obtained  directly from the results by Ambrose and Kakutani \cite{ak},
\begin{lema}
 If $\tau: \Sigma \to \R$ is bounded away from zero, the map $R \colon
\M_\sigma(\tau) \to \M_\Phi$  defined by
\begin{equation*}
R(\mu)=(\mu \times m)|_{Y} /(\mu \times m)(Y)
\end{equation*}
  is  bijective.
\end{lema}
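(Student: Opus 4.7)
The plan is to prove injectivity and surjectivity separately, using the lower bound $\tau \geq \delta > 0$ crucially only in the latter (well-definedness of $R$ on $\M_\sigma(\tau)$ is already guaranteed by the hypothesis $\int \tau\,\mathrm{d}\mu < \infty$, which makes $(\mu \times m)(Y)$ finite and nonzero). Throughout I will work with the fundamental domain $Y$ in \eqref{shift} and test sets of the form $C \times [0,s]$ with $C \subset \Sigma$ a cylinder and $s$ sufficiently small so that no identification occurs.

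For injectivity, suppose $R(\mu_1) = R(\mu_2)$. Fix $0 < s < \delta$; then for every cylinder $C \subset \Sigma$, the product set $C \times [0,s]$ lies inside $Y$ with no wrap-around, and unwinding the definition of $R$ gives
\begin{equation*}
R(\mu_i)(C \times [0,s]) \;=\; \frac{s\,\mu_i(C)}{\int \tau\,\mathrm{d}\mu_i}.
\end{equation*}
Taking $C = \Sigma$ (or exhausting by cylinders) recovers $\int \tau\,\mathrm{d}\mu_i$ from $R(\mu_i)$, and then the displayed equality determines $\mu_i(C)$ on every cylinder. Since cylinders generate the Borel $\sigma$-algebra of $\Sigma$, we conclude $\mu_1 = \mu_2$.

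For surjectivity, I would invoke the Ambrose--Kakutani representation. Given $\nu \in \M_\Phi$, define a set function on cylinders of $\Sigma$ by
\begin{equation*}
\widetilde{\mu}(C) \;:=\; \frac{1}{s}\,\nu\!\left(C \times [0,s]\right), \qquad 0 < s < \delta,
\end{equation*}
which is independent of $s$ in that range by $\Phi$-invariance of $\nu$ and the absence of identifications below the level $\delta$. This extends to a Borel measure on $\Sigma$, and the lower bound $\tau \geq \delta$ is what guarantees $\widetilde{\mu}$ is \emph{finite}: indeed $\widetilde{\mu}(\Sigma) = s^{-1}\nu(\Sigma \times [0,s]) \leq s^{-1} < \infty$. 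Normalising, set $\mu := \widetilde{\mu}/\widetilde{\mu}(\Sigma)$. The $\sigma$-invariance of $\mu$ follows by partitioning $Y$ into the slabs $\{(x,t) : 0 \leq t < \tau(x)\}$ and comparing $\nu$-measure of $C \times [0,s]$ with $\sigma^{-1}C \times [0,s]$ via the flow (Ambrose--Kakutani).

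It remains to check $\int \tau\,\mathrm{d}\mu < \infty$ and $R(\mu) = \nu$. For the former, approximate $\tau$ from below by bounded truncations $\tau_N = \min(\tau,N)$; Fubini gives $\int \tau_N\,\mathrm{d}\widetilde{\mu} = \nu(\{(x,t) \in Y : t < \tau_N(x)\}) \leq \nu(Y) = 1$, and monotone convergence yields $\int \tau\,\mathrm{d}\mu \leq 1/\widetilde{\mu}(\Sigma) < \infty$. Finally $R(\mu) = \nu$ because both measures agree on the generating family $\{C \times [0,s] : s < \delta\}$ by construction, hence agree on all Borel sets of $Y$.

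The main obstacle, and the only place the hypothesis $\tau \geq \delta$ is genuinely used, is ensuring that $\widetilde{\mu}$ is a \emph{finite} measure in the surjectivity step: without a positive lower bound on $\tau$, the cross-section measure produced by the Ambrose--Kakutani construction can be infinite (sigma-finite) while still having $\int \tau\,\mathrm{d}\widetilde{\mu} < \infty$, and then no element of $\M_\sigma(\tau)$ maps to $\nu$. The bound $s < \delta$ is what makes the definition of $\widetilde{\mu}(C)$ independent of $s$ and simultaneously forces $\widetilde{\mu}(\Sigma) \leq 1/\delta$.
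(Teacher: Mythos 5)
The paper offers no proof of this lemma at all---it is stated as following ``directly from the results by Ambrose and Kakutani''---and your argument is precisely the standard unpacking of that citation (cross-section measure $\widetilde{\mu}(C)=s^{-1}\nu(C\times[0,s])$ for $s<\delta$, finiteness of $\widetilde{\mu}$ from the lower bound $\tau\geq\delta$, product structure on the bottom slab, injectivity by recovering $\int\tau\,\mathrm{d}\mu_i$ and then $\mu_i$ on cylinders), so it is essentially the intended approach and is correct. One imprecision worth fixing: the family $\{C\times[0,s]:s<\delta\}$ does \emph{not} by itself generate the Borel $\sigma$-algebra of $Y$ (it only sees the slab below height $\delta$), so the final identification $R(\mu)=\nu$ requires using the $\Phi$-invariance of both measures to propagate agreement from $\Sigma\times[0,\delta)$ up to all of $Y$---which is exactly the content of the Ambrose--Kakutani theorem you invoke---and, relatedly, your Fubini computation of $\int\tau_N\,\mathrm{d}\widetilde{\mu}$ already presupposes $\nu=\widetilde{\mu}\times m$ on all of $Y$, so that step should come after, not before, establishing $R(\mu)=\nu$.
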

Given a continuous function $g \colon Y\to\R$ we define the function
$\Delta_g\colon\Sigma\to\R$~by
\[
\Delta_g(x)=\int_{0}^{\tau(x)} g(x,t) \, \text{d}t.
\]
The function $\Delta_g$ is also continuous, moreover
\begin{equation} \label{rela}
\int_{Y} g \, \text{d}R(\nu)= \frac{\int_\Sigma \Delta_g\, \text{d}
\nu}{\int_\Sigma\tau \, \text{d} \nu}.
\end{equation}

\begin{rem}[Extension of potentials defined on the base] \label{exten}
Let $\phi \colon \Sigma  \to \R$ be a locally H\"older potential. It is shown in
\cite{brw} that there exists a continuous function $g \colon Y \to
\R$ such that $\Delta_g=\phi$. This provides a tool to construct
examples.
\end{rem}

\subsection{Abramov's formula} \label{abramov}
In this short subsection we recall a classical result by Abramov.  The entropy of a flow with respect to an invariant measure can be defined by  the entropy of the corresponding time one map. For the definition of entropy in the context of maps see \cite[Chapter 4]{wa}. In 1959 Abramov \cite{a} proved his well known result on the entropy of flows and Savchenko \cite[Theorem 1]{sav} proved an analogous result in the setting of this paper.
\begin{prop}[Abramov-Savchenko]
Let $\mu \in \M_{\Phi}$ be such that  $\mu=(\nu \times m)|_{Y} /(\nu \times m)(Y)$, where $\nu \in \M_{\sigma}$ then
\begin{equation}
h_{\Phi}(\mu)=\frac{h_{\sigma}(\nu)}{\int \tau \text{d} \nu}.
\end{equation}
\end{prop}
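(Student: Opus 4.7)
The plan is to identify the shift $\sigma$ as the Poincar\'e first-return map of the flow $\Phi$ to the cross-section $\Sigma \times \{0\} \subset Y$, with return time function $\tau$, and then to invoke two classical results: Abramov's theorem that the entropy of the flow equals the entropy of its time-one map, and the Rokhlin-Abramov formula for the entropy of induced (first-return) maps. Note first that the hypothesis forces $\int \tau \, d\nu < \infty$, since $(\nu \times m)(Y) = \int \tau \, d\nu$ must be finite for the normalization to produce a probability measure; write $c = \int \tau \, d\nu$.

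Under the quotient identification making $\varphi_{\tau(x)}(x,0) = (\sigma(x), 0)$, the set $\Sigma \times \{0\}$ serves as a global section whose induced map is exactly $\sigma$ and whose induced measure (the conditional of $\mu$) is exactly $\nu$. Applied in this setting, the Rokhlin-Abramov formula for induced maps reads $h_\nu(\sigma) = h_\mu(\varphi_1)/(\mu\text{-mass of the section})$. Since in flow-box coordinates the mass of the section is $1/c$, and since $h_\Phi(\mu) = h_{\varphi_1}(\mu)$ by Abramov's classical theorem for flows, one obtains
\[
h_\sigma(\nu) = c \cdot h_\Phi(\mu) = h_\Phi(\mu) \cdot \int \tau \, d\nu,
\]
which is the stated identity.

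The main obstacle is that $\tau$ need not be bounded and $\Sigma$ is not compact, so the cross-section argument of Ambrose-Kakutani \cite{ak} must be adapted with care. The standard workaround, which in this exact setting is carried out by Savchenko \cite{sav}, is to approximate via an increasing sequence of compact $\sigma$-invariant subsets $K_n \subset \Sigma$ (on each of which $\tau$ is automatically bounded above and below since it is continuous and strictly positive), apply the classical Abramov formula to the suspension flow over $K_n$ with the normalized restriction $\nu_n := \nu|_{K_n}/\nu(K_n)$, and pass to the limit as $n \to \infty$. The integrability $\int \tau \, d\nu < \infty$ controls the contribution to both $\int \tau \, d\nu_n$ and to the entropy coming from the complement of $K_n$, and the fact that $h_\sigma(\nu)$ can be recovered as a supremum over compact $\sigma$-invariant subsystems---mirroring the approximation property $P(\phi) = \sup_{K \in \cK} P_K(\phi)$ recalled earlier---ensures both sides of the identity converge to the claimed values.
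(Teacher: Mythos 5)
The paper gives no proof of this proposition; it is imported wholesale from Abramov \cite{a} and Savchenko \cite[Theorem 1]{sav}, so your sketch has to stand on its own, and it does not. The first half is an invalid application of the tool you name: the Rokhlin--Abramov formula for induced transformations, $h_{\mu_A}(T_A)=h_\mu(T)/\mu(A)$, requires the inducing set $A$ to have \emph{positive} measure, whereas the section $\Sigma\times\{0\}$ has $\mu$-measure zero. Declaring that ``in flow-box coordinates the mass of the section is $1/c$'' is the standard heuristic (thicken the section to $\Sigma\times[0,\epsilon)$, whose mass is $\epsilon/c$, and let $\epsilon\to 0$), but turning that heuristic into a theorem is precisely the content of Abramov's special-flow formula --- i.e.\ of the statement you are trying to prove. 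So this part restates the claim rather than deriving it.

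The second half, where you try to handle non-compactness, contains the fatal gap. If $\nu$ is ergodic and not carried by a finite-alphabet subshift --- the situation of main interest throughout this paper --- then every compact $\sigma$-invariant set $K\subset\Sigma$ is contained in a subshift on finitely many symbols and hence satisfies $\nu(K)\in\{0,1\}$, forcing $\nu(K)=0$. The normalized restrictions $\nu|_{K_n}/\nu(K_n)$ are then undefined, and no increasing sequence of compact invariant sets exhausts $\nu$. You are conflating the approximation property of the \emph{pressure} (a supremum over measures supported on compact subsystems, which is a statement about the variational problem, not about any fixed measure) with approximability of a \emph{given} invariant measure by its restrictions, which simply fails here. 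The obstacle you are trying to work around is in fact not there: Abramov's 1959 theorem is a purely measure-theoretic statement about special flows over an automorphism of a Lebesgue space with a positive integrable roof function, so neither compactness of $\Sigma$ nor boundedness of $\tau$ enters, and the integrability $\int\tau\,\mathrm{d}\nu<\infty$ you correctly extracted from the normalization is the only hypothesis needed. The honest options are to cite Abramov/Savchenko as the paper does, or to reproduce Abramov's partition argument in the Lebesgue-space category; the compact-exhaustion route cannot be repaired.
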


It follows from Savchenko's result that
\begin{lema}
Let $\mu \in \M_{\Phi}$ be such that $\mu=(\nu \times m)|_{Y} /(\nu \times m)(Y)$, we have that  $h_{\Phi}(\mu)= \infty$ if and only if
$h_{\sigma}(\nu)= \infty$.
\end{lema}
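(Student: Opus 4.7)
The plan is to derive this lemma as an almost immediate consequence of the Abramov--Savchenko proposition stated just above. First I would note that, since $\mu$ is a probability measure on $Y$ obtained from $\nu\times m$ by normalization, the quantity $(\nu\times m)(Y)=\int\tau\,\text{d}\nu$ must lie in $(0,\infty)$; otherwise the normalization defining $\mu$ would either be undefined or would not produce a probability measure. In particular, $\nu\in\M_\sigma(\tau)$, so Abramov--Savchenko applies and yields
\[
 h_{\Phi}(\mu)=\frac{h_{\sigma}(\nu)}{\int\tau\,\text{d}\nu},
\]
with a strictly positive, finite denominator.

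From here the equivalence is purely arithmetic: if $h_{\sigma}(\nu)=\infty$, then the right-hand side is $\infty$ (the denominator is finite and positive), hence $h_{\Phi}(\mu)=\infty$; conversely, if $h_{\sigma}(\nu)<\infty$, then the right-hand side is finite, so $h_{\Phi}(\mu)<\infty$. Taking contrapositives in both directions gives the claimed biconditional.

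The only point requiring a line of care is the interpretation of the Abramov--Savchenko identity when $h_\sigma(\nu)=\infty$, which I would address by observing that the identity is to be read in $[0,\infty]$ with the convention $\infty/c=\infty$ for $c\in(0,\infty)$; this is consistent with how entropy of flows is defined via the time-one map in \cite[Chapter 4]{wa}. I do not foresee a genuine obstacle: the lemma is essentially a restatement of the proposition together with the observation that the roof integral is finite and nonzero whenever the normalized product measure is a probability measure.
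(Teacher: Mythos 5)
Your proof is correct and follows exactly the route the paper intends: the paper states this lemma as an immediate consequence of the Abramov--Savchenko proposition (it gives no further argument), and your observation that the normalizing constant $(\nu\times m)(Y)=\int\tau\,\mathrm{d}\nu$ lies in $(0,\infty)$ is precisely the point that makes the equivalence a one-line consequence of the formula $h_{\Phi}(\mu)=h_{\sigma}(\nu)/\int\tau\,\mathrm{d}\nu$.
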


When the phase space is non-compact  there are several different notions of topological entropy of a flow, we will consider the following,
\begin{defi}
The topological entropy of the suspension flow $(Y ,\Phi)$ denoted by $h(\Phi)$ is defined by
\begin{equation*}
h(\Phi):= \sup \left\{  h_{\Phi}(\mu) : \mu \in \M_{\Phi}   \right\}.
\end{equation*}
\end{defi}

%-------------------------------------
\section{Topological pressure for suspension semi-flows} \label{sec:Toppress}
%-------------------------------------

%-------------------------------------
 %-------------------------------------

A definition  of topological entropy for suspension semi-flows over \emph{countable} Markov shifts was first given by Savchenko \cite{sav}. He considered the case of roof functions depending only on the first coordinate, but not necessarily bounded away from zero. Topological entropy corresponds to topological pressure of the zero function. In \cite{bi1} Barreira and Iommi gave a definition of topological pressure for locally H\"older roof functions  bounded away from zero. Both in \cite{sav} and in \cite{bi1} the definitions are given implicitly.
 Recently, Kempton \cite{ke} and independently  Jaerisch, Kesseb\"ohmer and  Lamei \cite{jkl} gave a definition  of pressure for roof functions of summable variations that need not to be bounded away from zero. Moreover, they gave a closed formula for it.

The following theorem summarises  the above results,

\begin{teo} \label{pres}
Let $(\Sigma, \sigma)$ a topologically mixing countable Markov shift and $\tau:\Sigma \to \R$ a positive function bounded away from zero of summable variations.  Let $(Y, \Phi)$ be the suspension semi-flow over $(\Sigma, \sigma)$ with roof function $\tau$. Let $g:Y \to \R$ be a function such that $\Delta_g:\Sigma \to \R$ is of summable variations. Then the following equalities hold
\begin{eqnarray*}
P_{\Phi}(g)&:=&\lim_{t \to \infty} \frac{1}{t} \log \left(\sum_{\phi_s(x,0)=(x,0), 0<s \leq t} \exp\left( \int_0^s g(\phi_k(x,0)) \text{d}k \right) \chi_{C_{i_0}}(x) \right) \\
&=& \inf\{t \in \R : P_{\sigma} (\Delta_g - t \tau) \leq 0\} =\sup \{t \in \R : P_{\sigma} (\Delta_g - t \tau) \geq 0\} \\
&=& \sup \left\{ h_{\mu}(\Phi) +\int_Y g \text{d} \mu : \mu\in
\M_\Phi \text{ and } -\int_Y g \, \text{d}\mu <\infty \right\}\\
&=& \sup \{ P_{\sigma|K}( \phi) : K\in \cK \},
\end{eqnarray*}
where $\cK$ is the set of all compact and $\Phi-$invariant sets and  $P_K$ is the
classical topological pressure of the potential $\phi$ restricted to the
compact and $\sigma$-invariant set $K$.

\end{teo}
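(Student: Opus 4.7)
The plan is to assemble the four equalities in Theorem~\ref{pres} by combining the results of Savchenko, Barreira--Iommi, Kempton, and Jaerisch--Kesseb\"ohmer--Lamei that the paragraph preceding the theorem already cites. The central ingredients are Abramov's formula, the integration identity \eqref{rela} relating $g$ on $Y$ to $\Delta_g$ on $\Sigma$, the variational principle for the Gurevich pressure, and the approximation property \eqref{*tan}.

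First I would establish the equality between the implicit definition and the variational supremum. Since $\tau$ is bounded away from zero, the map $R$ is a bijection $\M_\sigma(\tau) \to \M_\Phi$; hence for every $\mu = R(\nu) \in \M_\Phi$ and every $t \in \R$, the Abramov--Savchenko identity combined with \eqref{rela} gives
\[
h_{\mu}(\Phi) + \int_Y g\, \text{d}\mu - t
= \frac{h_{\nu}(\sigma) + \int_\Sigma (\Delta_g - t\tau)\, \text{d}\nu}{\int_\Sigma \tau\, \text{d}\nu}.
\]
Since the denominator is positive, numerator and left-hand side share their sign. Taking the supremum over $\nu \in \M_\sigma(\tau)$ on both sides and invoking the variational principle for $P_\sigma$ yields the equivalence $P_\sigma(\Delta_g - t\tau) \leq 0$ iff the variational supremum for the flow is $\leq t$. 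Convexity and continuity of $t \mapsto P_\sigma(\Delta_g - t\tau)$ wherever it is finite then force the $\inf$ and $\sup$ expressions in the theorem to coincide, and both agree with the flow-side variational supremum.

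Next I would match the Bowen-type periodic-orbit definition to this quantity. A periodic point $x$ of $\sigma$ with $\sigma^n x = x$ corresponds to a closed $\Phi$-orbit through $(x,0)$ of length $\tau_n(x) := \sum_{i=0}^{n-1}\tau(\sigma^i x)$, and $\int_0^{\tau_n(x)} g(\phi_k(x,0))\, \text{d}k = \sum_{i=0}^{n-1} \Delta_g(\sigma^i x)$. Using that $\tau$ is bounded away from zero to control the correspondence between the time windows $0 < s \leq t$ and the ranges of $n$ that can contribute, the flow-side sum in the definition of $P_\Phi(g)$ is sandwiched between Sarig--Gurevich partition sums for the potential $\Delta_g - u\tau$ at values of $u$ close to the target; the standard generating-function argument used by Kempton and by Jaerisch--Kesseb\"ohmer--Lamei then identifies its exponential growth rate with the unique $t$ at which $P_\sigma(\Delta_g - t\tau) = 0$, matching the implicit definition.

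Finally, the approximation-by-compacts formula comes from applying \eqref{*tan} on the base: each $K \in \cK$ produces a compact $\Phi$-invariant set $Y_K = \{(x,t) \in Y : x \in K\}$ on which the classical Bowen--Ruelle theory identifies $P_{\Phi|Y_K}(g)$ with the solution of $P_{\sigma|K}(\Delta_g - t\tau) = 0$; taking $\sup_K$ and using \eqref{*tan} recovers the value obtained above. The hard part will be handling $\sigma$-invariant probability measures $\nu$ with $\int \tau\, \text{d}\nu = +\infty$: they lie outside the domain of $R$ yet are admissible in the variational principle for $P_\sigma$, so one must rule out that they raise $P_\sigma(\Delta_g - t\tau)$ above $0$ without contributing to the flow-side supremum. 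The remedy is to approximate any such $\nu$ by $\sigma$-invariant measures supported on compact sets (again via \eqref{*tan}), for which the integral of $\tau$ is automatically finite; the side condition $-\int_Y g\, \text{d}\mu < \infty$ on the flow side is preserved under these approximations because $\tau$ is bounded below.
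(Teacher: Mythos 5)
The paper itself gives no proof of Theorem \ref{pres}: it is presented explicitly as a summary of results from Savchenko, Barreira--Iommi, Kempton and Jaerisch--Kesseb\"ohmer--Lamei, so your plan of reassembling those results from Abramov's formula, the identity \eqref{rela}, the variational principle for the Gurevich pressure and the approximation property \eqref{*tan} is exactly the route the cited references take, and the overall architecture (bijection $R$ on $\M_\sigma(\tau)$, sign transfer through the Abramov quotient, reduction of the infinite-$\int\tau$ measures to equilibrium states on compact subsystems) is sound.

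One step is stated in a way that would fail, and it is precisely the subtlety this paper is built around. In matching the periodic-orbit definition to the implicit one you identify the exponential growth rate with ``the unique $t$ at which $P_{\sigma}(\Delta_g - t\tau)=0$''. Such a root need not exist: the function $t \mapsto P_{\sigma}(\Delta_g - t\tau)$ can pass from $+\infty$ directly to a strictly negative value at the critical parameter. This is exactly the mechanism producing the phase transitions of Theorem \ref{teo1}, and it is the error in \cite[Propositions 7 and 8]{bi1} that the authors single out in the Remark following the proof of Theorem \ref{teo1}. The sandwich argument must therefore be run against the threshold $\inf\{t \in \R : P_{\sigma}(\Delta_g - t\tau)\le 0\}$ directly: show the growth rate is at most $u$ for every $u$ with $P_{\sigma}(\Delta_g-u\tau)< 0$ and at least $u$ for every $u$ with $P_{\sigma}(\Delta_g-u\tau)> 0$, and conclude by monotonicity of $t\mapsto P_{\sigma}(\Delta_g-t\tau)$ (which is what also forces the $\inf$ and $\sup$ expressions to coincide), never invoking a parameter where the pressure actually vanishes. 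No such care is needed in your compact-approximation step, since on a compact invariant set $K$ the function $t\mapsto P_{\sigma|K}(\Delta_g-t\tau)$ is finite, continuous and strictly decreasing, so a root does exist there; and your treatment of measures with $\int\tau\,\text{d}\nu=\infty$ via equilibrium states of compact subsystems is the standard and correct fix.
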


The third equality establishes the variational principle in this setting and the last the approximation property of the pressure.

\begin{rem}
If the roof function $\tau$ is only assumed to be positive (not necessarily bounded away from zero) the variational principle has only been proved for ergodic measures, that is
\[ P_{\Phi}(g)= \sup \left\{ h_{\mu}(\Phi) +\int_Y g \text{d} \mu : \mu\in
\M_\Phi \text{ ergodic and } -\int_Y g \, \text{d}\mu <\infty \right\}.\]
All the other results in Theorem \ref{pres} hold and have already been proved.
\end{rem}

Let now $g \colon Y \to \R$ be a continuous function such that
$\Delta_g$ is of summable variations. A measure $\mu\in \M_\Phi$ is
called an \emph{equilibrium measure} for $g$ if
\[
P_\Phi(g)= h_{\mu}(\Phi) +\int_Y g \text{d} \mu.
\]

The next theorem characterises potentials with equilibrium measures in the case that the roof function is bounded away from zero (see \cite[Theorem 4]{bi1}).

\begin{teo}\label{faa}
Let $\Phi$ be a suspension semi-flow on $Y$ over a countable Markov
shift  $(\Sigma, \sigma)$  and roof function $\tau$ of summable variations and bounded away from zero. Let $g \colon Y \to \R$ be a continuous function such
that $\Delta_g$ is of summable variations. Then
there is an equilibrium measure $\mu_g\in \M_\Phi$ for $g$ if  and only if
we have that $P(\Delta_g -P_{\Phi}(g) \tau)=0$ and there exists
 an equilibrium measure $\nu_g\in \M_\sigma(\tau)$ for $\Delta_g -P_{\Phi}(g) \tau$.
\end{teo}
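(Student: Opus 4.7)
The plan is to reduce the variational problem on the flow space $Y$ to a variational problem on the base shift $\Sigma$, using the Abramov--Savchenko proposition together with the bijection $R$. Set $P := P_\Phi(g)$. Combining the Abramov--Savchenko formula with the change-of-variables identity~(\ref{rela}), for any $\nu \in \M_\sigma(\tau)$ and $\mu := R(\nu) \in \M_\Phi$ one computes
$$h_\Phi(\mu) + \int_Y g\, \text{d}\mu \;=\; \frac{h_\sigma(\nu) + \int_\Sigma \Delta_g\, \text{d}\nu}{\int_\Sigma \tau\, \text{d}\nu}.$$
Since $\int \tau\,\text{d}\nu > 0$, the flow free energy equals $P$ if and only if $h_\sigma(\nu) + \int (\Delta_g - P\tau)\,\text{d}\nu = 0$. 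This is the key equivalence that drives both directions.

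Second, I would verify the identity $P_\sigma(\Delta_g - P\tau) = 0$. Theorem \ref{pres} gives $P = \inf\{t : P_\sigma(\Delta_g - t\tau) \le 0\} = \sup\{t : P_\sigma(\Delta_g - t\tau) \ge 0\}$; combined with the monotonicity and convexity of the map $t \mapsto P_\sigma(\Delta_g - t\tau)$, these two one-sided bounds pin the value at $t = P$ to $0$.

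With these two ingredients in hand, both implications fall out of the lemma that, under the hypothesis that $\tau$ is bounded away from zero, $R : \M_\sigma(\tau) \to \M_\Phi$ is a bijection. For the "$\Leftarrow$" direction: given an equilibrium measure $\nu_g \in \M_\sigma(\tau)$ for $\Delta_g - P\tau$, the equality $h_\sigma(\nu_g) + \int(\Delta_g - P\tau)\,\text{d}\nu_g = P_\sigma(\Delta_g - P\tau) = 0$ combined with the key equivalence shows that $\mu_g := R(\nu_g)$ has flow free energy $P$, so by the variational principle in Theorem \ref{pres} it is an equilibrium measure for $g$. For the "$\Rightarrow$" direction: given an equilibrium measure $\mu_g \in \M_\Phi$ for $g$, bijectivity of $R$ provides $\nu_g \in \M_\sigma(\tau)$ with $\mu_g = R(\nu_g)$, and the key equivalence then forces $h_\sigma(\nu_g) + \int(\Delta_g - P\tau)\,\text{d}\nu_g = 0 = P_\sigma(\Delta_g - P\tau)$, so $\nu_g$ is an equilibrium measure for $\Delta_g - P\tau$.

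The main technical obstacle I anticipate is confirming $P_\sigma(\Delta_g - P\tau) = 0$ cleanly: a priori the shift pressure, as a function of $t$, could jump discontinuously across $t = P$ if $P$ lies at the boundary of its domain of finiteness, so the argument has to exploit the symmetric $\inf = \sup$ characterization from Theorem \ref{pres} (and, if necessary, lower semicontinuity of the pressure) rather than only a one-sided bound. A secondary careful point is that in the "$\Rightarrow$" direction the preimage $\nu_g$ is automatically in $\M_\sigma(\tau)$, since the domain of $R$ consists exactly of those $\sigma$-invariant probability measures for which $\tau$ is integrable.
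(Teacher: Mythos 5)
There is a genuine gap, and it sits exactly where you flagged your ``main technical obstacle'': the claim that $P_\sigma(\Delta_g - P_{\Phi}(g)\tau)=0$ can be verified unconditionally is false, and the symmetric $\inf=\sup$ characterization from Theorem \ref{pres} does not rescue it. Writing $F(t)=P_\sigma(\Delta_g-t\tau)$, it can happen that $F(t)=+\infty$ for all $t<P_{\Phi}(g)$ while $F(P_{\Phi}(g))<0$; then $\inf\{t:F(t)\le 0\}$ and $\sup\{t:F(t)\ge 0\}$ both equal $P_{\Phi}(g)$ (the set $\{t: F(t)\ge 0\}$ is the open half-line $(-\infty,P_{\Phi}(g))$, whose supremum is still $P_{\Phi}(g)$), yet $F(P_{\Phi}(g))\neq 0$. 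This is not a pathological corner case: it is precisely the mechanism producing the phase transitions studied in this paper, and Example \ref{1phase} exhibits it explicitly --- for $t<t_0$ one has $P_{\Phi}(tg)=1$ but $P(t\Delta_g-\tau)<0$, and the conclusion drawn there (via Theorem \ref{faa}) is that $tg$ has \emph{no} equilibrium measure. If your identity held in general, the condition $P(\Delta_g-P_{\Phi}(g)\tau)=0$ in the statement would be redundant, which it is not.

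The rest of your architecture --- Abramov--Savchenko plus relation \eqref{rela} and the bijectivity of $R$ on $\M_\sigma(\tau)$ when $\tau$ is bounded away from zero, reducing the flow variational problem to the shift one --- is the right reduction (the paper itself defers the proof to \cite[Theorem 4]{bi1}). The ``$\Leftarrow$'' direction is unaffected, since there $P(\Delta_g-P_{\Phi}(g)\tau)=0$ is a hypothesis. To repair ``$\Rightarrow$'' you must derive the identity \emph{from} the existence of $\mu_g$ rather than a priori: pulling $\mu_g$ back to $\nu_g=R^{-1}(\mu_g)\in\M_\sigma(\tau)$, your key equivalence gives $h(\nu_g)+\int(\Delta_g-P_{\Phi}(g)\tau)\,\text{d}\nu_g=0$, whence $F(P_{\Phi}(g))\ge 0$ by the variational principle on the shift; the reverse inequality $F(P_{\Phi}(g))\le 0$ follows from $F(t)\le 0$ for all $t>P_{\Phi}(g)$ together with lower semicontinuity of $t\mapsto F(t)$ (a supremum of affine functions of $t$, or of continuous functions via the approximation property). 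One must also check the integrability side conditions ($h(\nu_g)<\infty$ and $\Delta_g$, $\tau$ integrable) so that the variational principle actually applies to $\nu_g$.
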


\subsection{From flows to semi-flows}
A classical result by Sinai allow us to reduce the study of the thermodynamic formalism for flows to the study of the thermodynamic formalism for semi-flows.

Let  $(\Sigma^*, \sigma)$ be a two-sided Markov shift. Two continuous functions $\phi,
\gamma \in C(\Sigma^*)$ are said to be \emph{cohomologous} if
there exists a continuous function $\psi \in C(\Sigma^*)$ such
that $\phi= \gamma +\psi \circ \sigma -\psi$. The following
statement is due to Sinai (see \cite[Proposition 1.2]{pa}) for
H\"older continuous functions and to Coelho and Quas \cite{cq} for
functions of summable variation.

\begin{prop}
If $\phi \in C(\Sigma^*)$ has summable variation, then there
exists $\gamma \in C(\Sigma^*)$ cohomologous to $\phi$ such that
$\gamma(x)=\gamma(y)$ whenever $x_i=y_i$ for all $i \geq 0$ (that is,
$\gamma$ depends only on the future coordinates).
\end{prop}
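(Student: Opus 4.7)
The plan is to follow the classical Sinai construction, adapted to the countable Markov setting, by producing a transfer function $\psi$ whose coboundary absorbs the dependence of $\phi$ on past coordinates.

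First, I would fix a canonical choice of past for each one-sided orbit. Since $(\Sigma^*,\sigma)$ is topologically mixing, for every symbol $a$ there exists at least one admissible half-infinite past $(c_i^a)_{i<0}$ satisfying $B(c_{-1}^a,a)=1$ and $B(c_i^a,c_{i+1}^a)=1$ for all $i<-1$; pick one such past for every symbol. This yields a section $\pi\colon \Sigma_B\to\Sigma^*$ of the natural future-projection $\pi_0\colon \Sigma^*\to\Sigma_B$. Set $\Pi:=\pi\circ\pi_0$, so $\Pi(x)$ agrees with $x$ on coordinates $i\geq 0$ but carries the fixed canonical past determined by $x_0$.

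Next, I would define
\[
\psi(x):=\sum_{n=0}^{\infty}\bigl[\phi(\sigma^{n}x)-\phi(\sigma^{n}\Pi(x))\bigr].
\]
For each $n$ the two points $\sigma^{n}x$ and $\sigma^{n}\Pi(x)$ coincide on all coordinates $i\geq -n$, so the summable-variation hypothesis bounds the $n$-th summand by the $n$-th variation of $\phi$; since $\sum_{n}V_n(\phi)<\infty$ the series converges uniformly and $\psi$ is continuous. Setting $\gamma:=\phi+\psi\circ\sigma-\psi$, cohomology with $\phi$ is automatic. The content of the proposition is verifying that $\gamma$ depends only on the future coordinates: a careful rearrangement of the telescoping expressions for $\psi(\sigma x)$ and $\psi(x)$, exploiting that $\sigma^{n}\Pi(\sigma x)$ and $\sigma^{n+1}\Pi(x)$ share the same future, collapses everything to
\[
\gamma(x)=\phi(\Pi(x))+\sum_{n=1}^{\infty}\bigl[\phi(\sigma^{n}\Pi(x))-\phi(\sigma^{n-1}\Pi(\sigma x))\bigr],
\]
and every term on the right-hand side is a function of $(x_i)_{i\geq 0}$ alone, since $\Pi(x)$ and $\Pi(\sigma x)$ are built from the futures of $x$ and $\sigma x$ respectively.

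The main obstacle is the construction of the canonical past in the countable Markov setting: unlike the finite-alphabet case where admissible bi-infinite sequences through any symbol are obvious, here one must invoke topological mixing to guarantee the existence of admissible half-infinite pasts through every symbol, and the resulting section $\pi$ is in general highly discontinuous, although this does not affect continuity of $\psi$ via the uniform estimate above. A secondary technical point is that the rearrangement of the series defining $\psi$ into the closed form for $\gamma$ must be justified term by term; this is controlled in the same way as the convergence of $\psi$, since each new pair $\bigl(\sigma^{n}\Pi(x),\sigma^{n-1}\Pi(\sigma x)\bigr)$ again agrees on arbitrarily long blocks of coordinates, so the summable-variation bound applies uniformly and absolute convergence validates the reordering.
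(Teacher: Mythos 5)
Your argument is correct and is exactly the classical Sinai construction that the paper relies on: the paper gives no proof of this proposition, attributing it to Sinai (via Parry--Pollicott) for H\"older potentials and to Coelho--Quas for summable variation, and your transfer function $\psi(x)=\sum_{n\ge 0}[\phi(\sigma^n x)-\phi(\sigma^n\Pi(x))]$ with a canonical past attached to $x_0$ is precisely the argument in those sources. Two harmless remarks: the map $\Pi$ is in fact continuous (its past part is locally constant, depending only on $x_0$), which is what makes each summand continuous and lets uniform convergence give continuity of $\psi$; and the rearrangement yielding the closed form for $\gamma$ is cleanest if done at the level of finite partial sums, where the cancellation of the divergent pieces $\sum_n\phi(\sigma^n x)$ is an exact telescoping rather than an appeal to absolute convergence of series that do not separately converge.
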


Furthermore, if the function $\phi$ has summable variation, then the
same happens with~$\gamma$. Denote by  $(\Sigma, \sigma)$ be the one-sided Markov shift associated to  $(\Sigma^*, \sigma)$. We note that $\gamma$ can be canonically
identified with a function $\varphi\colon\Sigma \to\R$, and
$P_{\Sigma^*}(\phi)= P_{\Sigma}(\varphi)$.

Note that the pressure function is invariant under cohomology, so is the existence of equilibrium measures and in general the thermodynamic formalism.

\section{Phase transitions}

It is a direct consequence of the approximation property of the pressure (see Theorem \ref{pres}) that
$P_\Phi(\cdot)$ is convex since it is the supremum of convex functions. In particular
the function $t \to P_{\Phi}(tg)$ is differentiable except in at most a countable set of points.
We say that the pressure function $t \to  P_{\Phi}(tg)$ has a \emph{phase transition} at the point $t=t_0$ if it is not analytic at $t=t_0$. We say that the pressure has a \emph{first order phase transition}  if the function is not differentiable at $t=t_0$.  In the context of countable Markov shifts, Sarig \cite[Corollary 4]{sa3}, showed that if $(\Sigma, \sigma)$ satisfies the BIP condition and $\phi:\Sigma \to \R$ is a locally H\"older potential then, when finite, the pressure function $t \to P(t \phi)$ is  real analytic.
  In this Section we prove that in the case of flows the situation is different. Indeed, the combinatorial structure of the base (e.g. satisfying the BIP condition) does not rule out the existence of phase transitions.

Let $(\Sigma, \sigma)$ be a countable Markov shift satisfying the BIP condition and let $\tau:\Sigma \to \R$ a positive  locally H\"older function for which there exists a real number $s_{\infty} \in \R$ satisfying
\begin{equation*}
P(-s\tau)=
\begin{cases}
\infty & \text{ if } s <s_{\infty};\\
\text{finite} & \text{ if } s > s_{\infty}.
\end{cases}
\end{equation*}
In particular, the above assumptions implies that $\tau$ is not bounded above.  Denote by $(Y, \Phi)$  the suspension semi-flow defined over the $(\Sigma, \sigma)$ with  roof function  $\tau$. In view of the above assumptions this semi-flow has finite entropy. Let $g:Y \to \R$ be a potential, we will study regularity properties and phase transitions of the pressure function $t \to P_{\Phi}(tg)$.

The main case we will consider is when
\begin{equation*}
\lim_{n \to \infty} \frac{\sup \{ \Delta_g(x): x \in C_n\}}{\inf \{ \tau(x): x \in C_n\}}=0.
\end{equation*}
Note that the case when for $a\in\R$ we have that
\begin{equation*}
\lim_{n \to \infty} \frac{\sup \{ \Delta_g(x): x \in C_n\}}{\inf \{ \tau(x): x \in C_n\}}=a
\end{equation*}

can easily be transfered to this case by subtracting the constant $a$ from $g$. In this setting, for positive locally H\"{o}lder potentials we can specify exactly when we get phase transitions.

\begin{teo} \label{teo1}
Let $(Y, \Phi)$ be  the suspension semi-flow of finite entropy defined over a shift on a countable alphabet satisfying the BIP condition where the roof function $\tau$ is bounded away from $0$. Let $g:Y \to  \R$ be a potential such that
$\Delta_g:\Sigma \to \R$ is locally H\"older and
\begin{equation*}
\inf \left\{ \frac{\int \Delta_g \text{d} \nu}{\int \tau \text{d} \nu} : \nu \in \M_{\sigma} \right\} = \lim_{n \to \infty} \frac{\sup \{ \Delta_g(x): x \in C_n\}}{\inf \{ \tau(x): x \in C_n\}}=0.
\end{equation*}
We then have the following situation
\begin{enumerate}
\item\label{part1}
For all $t \in \R$ we have that  $P_{\Phi}(tg)\geq s_{\infty}$ and in the case that $\Delta_g(x)>0$ for all $x\in\Sigma$ we have that $\lim_{t\to-\infty}P_{\Phi}(tg)=s_{\infty}$.
\item\label{part2}
If for all $t\in\R$ we have that $P(t\Delta_g-s_{\infty}\tau)=\infty$ then there is no phase transition and the function $t\to P(tg)$ is real analytic.
\item\label{part3}
If $\Delta_g(x)>0$ for all $x\in\Sigma$ and there exists $t^* \in \R$ such that $P(t^*\Delta_g-s_{\infty}\tau)<\infty$ then there exists $t_0 \in \R$  such that $t_0=\sup\{t:P(t \Delta_g-s_{\infty}\tau)\leq 0\}$, and
\begin{eqnarray*}
P_{\Phi}(tg)=
\begin{cases}
\text{real analytic and strictly convex } & \text{ if } t>t_0;\\
s_{\infty}& \text{ if } t \leq t_0.
\end{cases}
\end{eqnarray*}
\end{enumerate}
\end{teo}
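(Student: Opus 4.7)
My approach rests on the closed-form expression from Theorem~\ref{pres},
\begin{equation*}
P_\Phi(tg) \;=\; \inf\{s\in\R : P_\sigma(t\Delta_g - s\tau)\leq 0\} \;=\; \sup\{s\in\R : P_\sigma(t\Delta_g - s\tau)\geq 0\},
\end{equation*}
together with Theorem~\ref{bip}, which on a BIP base guarantees that $(t,s)\mapsto P_\sigma(t\Delta_g-s\tau)$ is real analytic, strictly decreasing in $s$, and convex on the open region where it is finite. Note also that the infimum condition $\inf_\nu \int\Delta_g\,d\nu/\int\tau\,d\nu = 0$ forces $\int\Delta_g\,d\nu\geq 0$ for every $\nu\in\M_\sigma$.

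For part~(\ref{part1}) I first prove $P_\Phi(tg)\geq s_\infty$. Fix $s<s_\infty$ and $t\in\R$; choose $\epsilon>0$ with $s+|t|\epsilon<s_\infty$, and pick $N$ so that $\sup_{C_n}\Delta_g\leq\epsilon\inf_{C_n}\tau$ for $n\geq N$. Since $P_\sigma(-(s+|t|\epsilon)\tau)=\infty$, the variational principle provides $\nu_k\in\M_\sigma$ with $h(\nu_k)-(s+|t|\epsilon)\int\tau\,d\nu_k\to\infty$, and finite flow entropy forces $\int\tau\,d\nu_k\to\infty$. The cylinder bound combined with $\int\Delta_g\,d\nu_k\geq 0$ then yields $0\leq\int\Delta_g\,d\nu_k\leq\epsilon\int\tau\,d\nu_k+O(1)$, so that
\begin{equation*}
h(\nu_k)+t\int\Delta_g\,d\nu_k-s\int\tau\,d\nu_k \;\geq\; h(\nu_k)-(s+|t|\epsilon)\int\tau\,d\nu_k-O(|t|) \;\to\; \infty,
\end{equation*}
giving $P_\sigma(t\Delta_g-s\tau)>0$ and therefore $P_\Phi(tg)\geq s_\infty$. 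For the limit $\lim_{t\to-\infty}P_\Phi(tg)=s_\infty$ under $\Delta_g>0$, I apply the flow variational principle and split $\M_\Phi$ by whether $\int g\,d\mu\geq\delta$ or $<\delta$: the first class contributes at most $h(\Phi)-|t|\delta\to-\infty$, while for $\mu=R(\nu)$ in the second class the ratio assumption forces $\nu$ to concentrate on tail cylinders, and on BIP subsystems obtained by dropping finitely many low symbols the estimate $P_\sigma(-s\tau)<\infty$ for $s>s_\infty$ caps the entropy rate $h(\nu)/\int\tau\,d\nu$ at $s_\infty+o_\delta(1)$.

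Parts~(\ref{part2}) and~(\ref{part3}) then follow by analysing the defining equation $P_\sigma(t\Delta_g-s\tau)=0$. Under (\ref{part2}), $P_\sigma(t\Delta_g-s_\infty\tau)=\infty$ for every $t$, so the strictly decreasing function $s\mapsto P_\sigma(t\Delta_g-s\tau)$ has a unique root $s(t)>s_\infty$, and the implicit function theorem applied to the analytic $P_\sigma$ gives $s(t)=P_\Phi(tg)$ real analytic. Under (\ref{part3}), since $\Delta_g>0$ the map $r\mapsto P_\sigma(r\Delta_g-s_\infty\tau)$ is convex and non-decreasing, finite at $t^*$ by hypothesis, and tends to $-\infty$ as $r\to-\infty$, so $t_0=\sup\{t:P_\sigma(t\Delta_g-s_\infty\tau)\leq 0\}$ is a finite real number. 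For $t>t_0$ the implicit function argument applies again; strict convexity follows because $\Delta_g$ cannot be cohomologous to a scalar multiple of $\tau$ (the infimum $0$ of the ratio would force $c\leq 0$, while $\Delta_g>0$ and a periodic-orbit argument rule out $c=0$), so the equilibrium measures $\mu_t=R(\nu_t)$ vary non-trivially in $t$. For $t\leq t_0$, monotonicity of $s\mapsto P_\sigma(t\Delta_g-s\tau)$ gives $P_\sigma(t\Delta_g-s\tau)\leq 0$ for all $s>s_\infty$, so $P_\Phi(tg)\leq s_\infty$, and combining with part~(\ref{part1}) yields equality.

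The main obstacle throughout is the passage from the cylinder-wise and asymptotic smallness of $\Delta_g/\tau$ to sharp integral bounds against invariant measures that simultaneously realise the divergence of $P_\sigma(-s\tau)$ or the extremal entropy rate at infinity; the BIP condition is essential here because it ensures that subsystems obtained by truncating finitely many low symbols continue to witness the same divergence threshold $s_\infty$, letting tail behaviour (controlled by $s_\infty$) be cleanly separated from finite-index behaviour (which is absorbed into the $O(1)$ error terms).
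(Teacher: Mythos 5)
Your architecture is essentially the paper's: the lower bound $P_{\Phi}(tg)\geq s_{\infty}$ via divergence of $P(t\Delta_g-s\tau)$ for $s<s_{\infty}$, the analytic regime via two-variable analyticity of $(t,s)\mapsto P(t\Delta_g-s\tau)$ on $\{s>s_{\infty}\}$ plus the implicit function theorem, and the frozen regime via monotonicity in $s$ once $P(t\Delta_g-s_{\infty}\tau)\leq 0$. Your derivation of the divergence directly from $P(-(s+|t|\epsilon)\tau)=\infty$ is a clean, self-contained variant of Lemma \ref{l_bound} (which instead invokes measures with $h(\nu_n)/\int\tau\,\text{d}\nu_n\to s_{\infty}$ from \cite{jr}), and your $\delta$-splitting for $\lim_{t\to-\infty}P_{\Phi}(tg)=s_{\infty}$ is organized at least as carefully as the paper's sketch. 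One inaccuracy there: $\int g\,\text{d}\mu<\delta$ does \emph{not} force $\nu$ to concentrate on tail cylinders (a measure can keep half its mass on a low cylinder and still have a tiny ratio because $\int\tau\,\text{d}\nu$ is huge); what it does force, since $\Delta_g>0$ is bounded below on the finitely many low cylinders, is that $\int\tau\,\text{d}\nu$ is large, and it is this, combined with $P(-s\tau)<\infty$ for $s>s_{\infty}$, that caps $h(\nu)/\int\tau\,\text{d}\nu$ at $s_{\infty}+o_{\delta}(1)$.

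The genuine gap is in part (3): you assert that $r\mapsto P(r\Delta_g-s_{\infty}\tau)$ tends to $-\infty$ as $r\to-\infty$ as though it followed from convexity, monotonicity and finiteness at $t^*$. It does not: a convex non-decreasing function finite at one point can have a finite limit at $-\infty$, and if $P(r\Delta_g-s_{\infty}\tau)$ never dropped below $0$ the set $\{t:P(t\Delta_g-s_{\infty}\tau)\leq 0\}$ would be empty and there would be no phase transition to exhibit. This is precisely the content of the paper's Lemma \ref{keyvalue}, proved by passing to the upper bound $Q(s,t)=\log\sum_{i}\exp\left(\sup\{t\Delta_g(x)-s\tau(x):x\in C_i\}\right)$, using finiteness at $t^*$ to make the tail $\sum_{i\geq N}$ smaller than $\epsilon$, and then using $\Delta_g>0$ to drive the finitely many head terms below $\epsilon$ by taking $t$ sufficiently negative; you need to supply this dominated-convergence step, since the finiteness of $t_0$ (hence the very existence of the transition) rests on it. A smaller omission of the same kind: $t_0<+\infty$ should be justified, e.g.\ by $P(t\Delta_g-s_{\infty}\tau)\geq h(\nu)+t\int\Delta_g\,\text{d}\nu-s_{\infty}\int\tau\,\text{d}\nu\to+\infty$ as $t\to+\infty$ for any fixed compactly supported $\nu$, using $\int\Delta_g\,\text{d}\nu>0$.
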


\begin{proof}

We divide the proof of this result in several Lemmas. The assumptions are always the same as in Theorem \ref{teo1}

\begin{lema} \label{1}
For every $t \in \R$ we have that $P_{\Phi}(t g) < \infty$.
\end{lema}

\begin{proof}
 Recall that the pressure satisfies the variational principle and that
the semi-flow has finite entropy. Therefore, in order to bound the pressure we just need to bound the integral of $g$. However since
$$\lim_{n \to \infty} \frac{\sup \{ \Delta_g(x): x \in C_n\}}{\inf \{ \tau(x): x \in C_n\}}=0$$
it is easy to see
that there exists $K \in \R$ such that for every $x \in \Sigma$ we have
\begin{equation*}
\left|\frac{\Delta_g(x)}{\tau(x)}\right| \leq K.
\end{equation*}
In particular, for every $\nu \in \M_{\sigma}$ we have
\begin{equation*}
\left|\frac{\int \Delta_g \text{d} \nu}{\int \tau \text{d} \nu}\right| \leq K.
\end{equation*}
Therefore, for every $\mu \in \M_{\Phi}$ we have $\left|\int g \text{d} \mu\right| \leq K$. The result now follows.
%
%To obtain an upper bound we use the domination assumption of $\tau$ over $\Delta_g$. This implies that for every $\epsilon >0$  there exists $n_{0} \in \N$ such that if $\nu \in \M_{\sigma}$ then {\bf{GI: removed the "a" in the inequality}}
%\[\frac{\int_{X_n} \Delta_g \ d\nu}{\int_{X_n} \tau \ d\nu} <  \epsilon,\]
%where $X_n = \cup_{k=n_0}^{\infty} C_n$. On the other hand, since the space $\Sigma \setminus X_n$ is compact , there exists $k \in \R$ such that for every $x \in \Sigma$  we have
%\[ 0 \leq \frac{\Delta_g(x)}{\tau(x)} \leq k. \]
%Therefore, if $\nu \in \M_{\sigma}$ then
%\begin{eqnarray*}
%\frac{\int \Delta_g \ d\nu}{\int \tau \ d\nu}=\frac{\int_{X_n \cup X_n^c} \Delta_g \ d\nu}{\int_{X_n \cup X_n^c} \tau \ d\nu} \leq \frac{\int_{X_n} \Delta_g \ d\nu}{\int_{X_n} \tau \ d\nu} + \frac{\int_{X_n^c} \Delta_g \ d\nu}{\int_{X_n^c} \tau \ d\nu} < a+ \epsilon +k:=k'
%\end{eqnarray*}
%That is, if $\mu \in \M_{\Phi}$ then
%\[0 \leq \int g d \mu \leq  k'.\]
%The result now follows.
\end{proof}

\begin{lema} \label{l_bound}
For every $t \in \R$ we have that
\begin{equation*}
P(t\Delta_g+s\tau)=
\begin{cases}
\infty & \text{ if } s<s_{\infty};\\
\text{finite } & \text{ if } s>s_{\infty}.
\end{cases}
\end{equation*}
In particular
$P_{\Phi}(t g) \geq  s_{\infty}$.
\end{lema}
\begin{proof}
By the definition of $s_{\infty}$ it follows that
\begin{equation*}
P(-s\tau)=
\begin{cases}
\infty & \text{ if } s<s_{\infty};\\
\text{finite } & \text{ if } s>s_{\infty}.
\end{cases}
\end{equation*}
Thus, by the variational principle we have that for any sequence of invariant measures $\{\mu_n\}_{n\in\N}$, if $\lim_{n\to\infty}\int\tau\text{d}\mu_n=\infty$ then $\limsup_{n\to\infty}\frac{h(\mu_n)}{\int\tau\text{d}\mu_n}\leq s_{\infty}$. Moreover, there exists a sequence of invariant measures $\{\nu_n\}_{n\in\N}$ such that $\lim_{n\to\infty}\int\tau\text{d}\nu_n=\infty$ and
$\lim_{n\to\infty}\frac{h(\nu_n)}{\int\tau\text{d}\nu_n}= s_{\infty}$ (see \cite[Lemma 2.5]{jr} and note that while it is only stated for the full shift there it could be easily extended to the BIP case).

We now fix $t\in\R$ and let $s<s_{\infty}$. We then have that
\begin{eqnarray*}
P(t\Delta_g+s\tau)\geq \lim_{n\to\infty} \left( h(\nu_n)+t\int\Delta_g\text{d}\nu_n-s\int\tau\text{d}\nu_n \right) =\\
 \lim_{n\to\infty} \left( \int\tau\text{d}\nu_n \left( \frac{h(\nu_n)}{ \int\tau\text{d}\nu_n} +t \frac{\int\Delta_g\text{d}\nu_n}{ \int\tau\text{d}\nu_n} -s \right)  \right).
\end{eqnarray*}
 As noted in Lemma \ref{1} the quotient $\frac{\int\Delta_g\text{d}\nu_n}{ \int\tau\text{d}\nu_n}$ is bounded above. Therefore
\begin{eqnarray*}
P(t\Delta_g+s\tau)\geq \lim_{n\to\infty} \left( \int\tau\text{d}\nu_n \left( \frac{h(\nu_n)}{ \int\tau\text{d}\nu_n} +t \frac{\int\Delta_g\text{d}\nu_n}{ \int\tau\text{d}\nu_n} -s \right)  \right) = \infty.
\end{eqnarray*}

Let us now fix $s>s_{\infty}$.  If there exists a sequence of invariant measures $\{ \mu_n\}_{n\in\N}$ such that
\begin{equation*}
\lim_{n\to\infty} \left( h(\mu_n)+t\int\Delta_g\text{d}\mu_n-s\int\tau\text{d}\mu_n \right)=\infty
\end{equation*}
 then there exists a sequence of measures $\{ \mu_n \}_{n \in \N}$ such that $\int\tau\text{d}\mu_n=\infty$ and $\lim_{n\to\infty}\frac{h(\mu_n)}{\int\tau\text{d}\mu_n}>s_{\infty}$ which is a contradiction. Thus, the first part of the lemma follows. The second part is a direct consequence of the definition of $P_{\Phi}$.
\end{proof}

The following lemma completes the proof of part\ref{part1} of
Theorem \ref{teo1}.

\begin{lema}
We have that if $\Delta_g(x)>0$ for all $x\in\Sigma$ then
\begin{equation*}
\lim_{t \to - \infty} P_{\Phi}(t g)=s_{\infty}.
\end{equation*}
\end{lema}
\begin{proof}
Note that
\[ \inf \left\{ \frac{\int \Delta_g \text{d} \nu}{\int \tau \text{d} \nu} : \nu \in \M_{\sigma} \right\} =0\]
implies that the slope of $P_{\Phi}(tg)$ as $t \to -\infty$ tends to zero.  Therefore, in order to compute the pressure for values of $t$ converging to $-\infty$ we have to consider measures $\nu_n \in \mathcal{M}_{\sigma}$ with $\lim_{n \to \infty}Ê\int \tau \text{d} \nu_n = \infty$.% and that the measures with larger free energy are those of the form $\nu_n \times m$

Moreover, if $\{ \nu_n\}_{n\in \N} \in \M_{\sigma}$ is a sequence of invariant measures such that
\[\lim_{n \to \infty}Ê\int \tau \text{d} \nu_n = \infty,\]
then
\[ \lim_{n \to \infty}Ê\frac{h(\nu_n)}{\int \tau \text{d} \nu_n}  \leq s_{\infty} \quad \text{ and } \quad
 t \lim_{n \to \infty}Ê\frac{\int \Delta_g \text{d} \nu_n}{\int \tau \text{d} \nu_n} =0.\]
 Therefore,
 \begin{equation*}
\lim_{t \to - \infty} P_{\Phi}(t g)=  s_{\infty}.
\end{equation*}
\end{proof}

We now consider when the function $t\to P_{\Phi}(tg)$ is real analytic.

\begin{lema}\label{analytic}
If we let $A\subset\R$ be an interval such that for all $t\in\R$ we have that $P(t\Delta_g-s_{\infty}\tau)>0$ then the function $t\to P_{\Phi}(tg)$ is real analytic on $A$.
\end{lema}
\begin{proof}
Recall that the function $f:(s_{\infty},\infty)\times \R\to \R$ defined by $f(t,s)=P(t\Delta_g-s\tau)$, is finite and so is real analytic in both variables $s$ and $t$ (see for example \cite{su}). If $t\in A$ then the function
$s \to f(s,t)$ is decreasing and $\lim_{s \to \infty} f(s,t)= -\infty$. Therefore, there exists an unique $s_0>s_{\infty}$ such that $f(t,s_0)=0$ and  $\frac{\partial f}{\partial s}(s_0)<0$. This implies that $P_{\Phi}(tg)=s_0$. Thus, by the Implicit Function Theorem, the function $t\to P_{\Phi}(tg)$ is real analytic on $A$.
\end{proof}

Note that Part \ref{part2} of Theorem \ref{teo1} immediately follows since if $P(t\Delta_g-s_{\infty}\tau)=\infty$ for all $t\in\R$ then in the above Lemma $A=\R$. To prove \ref{part3} of Theorem \ref{teo1} we need the following lemma.

\begin{lema}\label{keyvalue}
If $\Delta_g(x)>0$ for all $x\in\Sigma$ and there exists $t^* \in \R$ such that $P(t^*\Delta_g-s_{\infty}\tau)<\infty$ then there exists a finite value  $t_0\in\R$ where
$$t_0=\sup\{t:P(t\Delta_g-s_{\infty}\tau)\leq 0\}.$$
\end{lema}
\begin{proof}
By the properties of the pressure function it suffices to show that if there exists $t^* \in \R$ where $P(t^*\Delta_g-s_{\infty}\tau)<\infty$ then $\lim_{t\to-\infty}P(t^*\Delta_g-s_{\infty}\tau)=-\infty$. Furthermore since $\Delta_g> 0$ we can assume that $t^*<0$. We consider an upper bound for the pressure function $Q:\R^2\to\R\cup\{\infty\}$ given by
$$Q(s,t)=\log\left(\sum_{i=0}^{\infty}\exp \left(\sup \left \{t\Delta_g(x)-s\tau(x) : x\in C_i \right\} \right) \right).$$
Note that $Q(s_\infty,t)=\infty$ if and only if $P(t\Delta_g-s_{\infty}\tau)=\infty$ (see \cite{mubook} Proposition 2.1.9  ) and $Q(s,t)\geq P(t\Delta_g-s\tau)$.

Thus if $P(t^*\Delta_g-s_{\infty}\tau)<\infty$ then $Q(t^*\Delta_g-s_{\infty}\tau)<\infty$. Hence, for any $\epsilon>0$ there exists $N \in \N$ such that
$$
\sum_{i=N}^{\infty}\exp(\sup\{t^* \Delta_g(x)-s_{\infty}\tau(x):x\in C_i\})\leq\epsilon.
$$
Furthermore since $\Delta g>0$ there will exist $t<t^*$ such that
\begin{equation} \label{serieQ}
\sum_{i=N}^{\infty}\exp(\sup\{t \Delta_g(x)-s_{\infty}\tau(x):x\in C_i\})\leq\epsilon.
\end{equation}
and
\begin{equation} \label{parte-finita}
\sum_{i=0}^{N}\exp(\sup\{t\Delta_g(x)-s_{\infty}\tau(x):x\in C_i\})\leq\epsilon.
\end{equation}
 Therefore, we have that
$$\left(\sum_{i=0}^{\infty}\exp \left(\sup\{t\Delta_g(x)-s_{\infty}\tau(x):x\in C_i\} \right)\right)\leq 2\epsilon$$
and the result follows.
\end{proof}
If we take $t_0$ as defined in Lemma \ref{keyvalue} and $A$ as defined in Lemma \ref{analytic} then $A=(t_0,\infty)$ and it follows immediately that $t\to P_{\Phi}(tg)$ is analytic on $A$. For $t\in (-\infty, t_0)$ we have by Lemma \ref{l_bound} that if $s<s_{\infty}$ then $P(t\Delta_g-s\tau)=\infty$  and $P(t\Delta_g-s_{\infty}\tau)<0$ which means that $P_{\Phi}(tg)=s_{\infty}$. This completes the proof of Theorem \ref{teo1}.

\end{proof}

\begin{rem}
We would like to point out a mistake in \cite[Proposition 8]{bi1}. It is claimed in there that if $g:Y \to \R$ is a  bounded  potential and $(\Phi, Y)$ is a suspension semi-flow defined over a BIP shift then there are no phase transitions. The error in the proof of  \cite[Proposition 8]{bi1} is in \cite[Proposition 7]{bi1}, where it is claimed that, under these assumptions, the equation $P(\Delta_g -t\tau)=0$ always has a root. This is not true as  Theorem \ref{teo1} and the examples below show.
\end{rem}

\section{Examples}
We now give some examples both to illustrate Theorem \ref{teo1} and to look at what can happen when the assumptions of Theorem \ref{teo1} are not met. Our first two examples are examples of  regular potentials which satisfy the conditions of Theorem \ref{teo1} and for one of which the pressure function exhibits no phase transition and for the other potential  one phase transition. It should be noticed that this is a phase transition of positive entropy. This type of phase transitions have received some attention recently, see for example  \cite{dgr,it}. The example we exhibit here is the first one of this type in the context of flows. Our final example shows that without the assumptions used in Theorem \ref{teo1} there is a possibility of infinitely many phase transitions.

\begin{eje}[No phase transitions]\label{nophase}
Let $(\Sigma, \sigma)$ be the full-shift on a countable alphabet, say $\N\cup\{0\}$ and $\tau:\Sigma \to \R$ be the roof function defined by
\begin{equation*}
\tau(x):= \log (n + 2) \quad \text{ if } \quad x \in C_n.
\end{equation*}
Denote by $(Y, \Phi)$ the suspension semi-flow with base $(\Sigma, \sigma)$ and roof function $\tau$. The entropy of this flow is finite and
\[h(\Phi)= \inf \{ s \in \R : P(-s\tau) \leq 0 \} > 1, \]
Consider a function $g:Y \to \R$ such that
\begin{equation*}
\Delta_g(x)=\log\log\log (n+1) \text{ if } \quad x \in C_n.
\end{equation*}
We then have that all the assumptions of Theorem \ref{teo1} are satisfied, $s_{\infty}=1$ and for any $t\in\R$
$$P(t\Delta g-\tau)=\log\left(\sum_{n=1}^{\infty} \frac{(\log\log n)^t}{n+1}\right)=\infty.$$
Thus, it follows in this case that the function $t\to P_{\Phi}(tg)$ is real analytic and strictly convex for all $t\in\R$. Moreover,  $\lim_{t\to-\infty}P_{\Phi}(tg)=1$.
\end{eje}

\begin{eje}[One Phase Transition] \label{1phase}
Let $(\Sigma, \sigma)$ be the full-shift on a countable alphabet, say $\N\cup\{0\}$. Let $k \in \N$ be such that
\[ \sum_{n=k}^{\infty} \frac{1}{n (\log n)^2} < 1. \]
Let $\tau:\Sigma \to \R$ be the roof function defined by
\begin{equation*}
\tau(x):= \log (n + k) \quad \text{ if } \quad x \in C_n.
\end{equation*}
Denote by $(Y, \Phi)$ the suspension semi-flow with base $(\Sigma, \sigma)$ and roof function $\tau$. The entropy of this flow is
\[h(\Phi)= \inf \{ s \in \R : P(-s\tau) \leq 0 \}, \]
that is $h(\Phi) \in (1, 2)$. Indeed, there exists $s \in (1, 2)$ such that $P(-s \tau)=0$.
Consider a function $g:Y \to \R$ such that
\begin{equation*}
\Delta_g(x)= \log (\log (n+k))  \quad \text{ if } \quad x \in C_n.
\end{equation*}
Again we have that $s_{\infty}=1$ and all the assumptions of Theorem \ref{teo1} are satisfied. For $t\in\R$ we have that
$$P(t\Delta_g-\tau)=\log \sum_{n=k}^{\infty}\frac{(\log n)^t}{n}$$
and so for all $t\geq -1$ we have that $P(t\Delta_g-\tau)=\infty$. However for all $t<-1$ we have that $P(t\Delta_g-\tau)<\infty$ and in particular $P(-2\Delta_g-\tau)<0$. Thus by Theorem \ref{teo1} the pressure function $t \to P_{\Phi}(tg)$ exhibits a phase transition at a point $t_0 \in (-2, -1)$. Moreover,
 \begin{equation*}
 P_{\Phi}(t g)=
 \begin{cases}
 1 & \text{ if } t < t_0 ; \\
 \text{real analytic and strictly convex}  & \text{ if } t \geq t_0.
   \end{cases}
 \end{equation*}
 In particular, the function $g$ exhibits a phase transition of positive entropy at the point $t=t_0$.

  Note that for $t < t_0$ we have
 \begin{equation*}
 P(t\Delta_g-P_{\Phi}(t g) \tau)=P(t\Delta_g-\tau)=\log \sum_{n=k}^{\infty}\frac{(\log n)^t}{n} <0.
\end{equation*}
It is a direct consequence of Theorem \ref{faa} that for every  $t < t_0$ the potential $tg$ does not have an equilibrium measure. On the other hand, if $t >t_0$ we have that
 \begin{equation*}
 P(t\Delta_g-P_{\Phi}(t g) \tau)=0.
\end{equation*}
Moreover, there exists a Gibbs measure $\nu_t$ corresponding to $t\Delta_g-P_{\Phi}(t g) \tau$ and
 \begin{equation*}
\int \tau d \nu_t = \sum_{n=1} \log(n+k) \nu_t(C_n)= \sum_{n=k} \frac{(\log n)^{t+1}}{n^{ P_{\Phi}(t g)}}.
\end{equation*}
Since $P_{\Phi}(t g)>1$ we have that $\tau \in L^1(\nu_t)$. Therefore, as consequence of Theorem \ref{faa}, we have that for every  $t > t_0$ the potential $tg$ does  have a unique equilibrium measure. If $t=t_0$ the potential $t_0g$ does not have an equilibrium measure. Indeed, even if $k$ is chosen so that $P(t\Delta_g-P_{\Phi}(t g) \tau)=P(t\Delta_g-\tau)=0$, we have that
since $t_0 \in (-2,-1)$ then   $\tau \notin L^1(\nu_{t_0})$. However, it is possible to slightly modify this example so that $t_o g$ has an equilibrium state.
\end{eje}

\begin{eje}[Countably many phase transitions]\label{count}
In Theorem \ref{teo1} we assumed that the roof function dominated the potential $\Delta_g$. This leads to the simple asymptotic behaviour
\begin{equation*}
\lim_{n \to \infty} \frac{\sup \{ \Delta_g(x): x \in C_n\}}{\inf \{ \tau(x): x \in C_n\}}=0.
\end{equation*}
In this example we show that without this assumption  much more complicated behaviour is possible. In particular, by dropping this assumption, we can construct a potential $g$ defined over a  suspension semi-flow for which the pressure function $t \to P_{\Phi}(t g)$ has countably many phase transitions. Again we work in the case where both $\Delta g$ and $\tau$ are locally constant functions.

We start by partitioning $\N\cup\{0\}$ as follows. We let
$$A_1=\{n\in\N:n\neq k^2 \text{ for any }k\in\N\}\cup\{0\}$$
and inductively define $A_{k+1}=\{n\in\N:\sqrt{n}\in A_k\}$.
We then have that $\N=\cup_k A_k$ and $A_i\cap A_j=\emptyset$ if $i\neq j$.

Let $(\Sigma, \sigma)$ be the full-shift on a countable alphabet. We consider the roof function $\tau:\Sigma\to \R$ to be defined by
$$\tau(x):=\log (n+k_i) \text{ if } x\in C_n \text{ and } n\in A_i,$$
where $k_i >0 $ are some fixed constant. Denote by $(Y, \Phi)$ the corresponding suspension semi-flow.

Let us define our function $g:Y \to \R$ such that
$$\Delta_g(x)=  c_i\tau(x)+\log\log (n+k_i) \text { if } x\in C_n \text{ and } n\in A_i$$
where  $c_0=0$, $c_1=-\frac{1}{4}$ and for $i>1$ we define $c_{i+1}=c_i-\frac{1}{2^{i}(i+1)}$.

We then have that if  $\sum_{n\in A_i}\frac{\left(\log(n+k_i)\right)^t}{(n+k_i)^{s-c_it}}<\infty$ for all $i \in \N$ then
$$P(t\Delta_g-s\tau)=\log \left( \sum_{i=1}^{\infty}\sum_{n \in A_i}\frac{\left(\log(n+k_i)\right)^t}{(n+k_i)^{s-c_it}} \right)$$
and otherwise $P(t\Delta_g-s\tau)=\infty$.

\begin{lema}\label{Plb}
With $\tau$ and $g$ defined as above we have that for all $t<0$
$$P_{\Phi}(tg)\geq \max_{i \in \N} \left\{\frac{1}{2^i}+c_it \right\}.$$
\end{lema}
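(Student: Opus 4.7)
I will work from the characterisation
$$P_\Phi(tg) = \inf\{s \in \R : P(t\Delta_g - s\tau) \leq 0\}$$
furnished by Theorem \ref{pres}: to show that $P_\Phi(tg) \geq s_i$, where $s_i := 1/2^i + c_i t$, it suffices to check that $P(t\Delta_g - s_i\tau) > 0$, for then $s_i$ lies strictly below the crossing value. Taking the maximum over $i \in \N$ will give the lemma.

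The choice of $s_i$ is designed so that $s_i - c_i t = 1/2^i$; hence, in the closed formula for $P(t\Delta_g - s\tau)$ recorded just above the lemma, the $j = i$ inner sum evaluated at $s = s_i$ reduces to
$$S_i := \sum_{n \in A_i} \frac{(\log(n+k_i))^t}{(n+k_i)^{1/2^i}}.$$
That same formula already states that $P(t\Delta_g - s_i\tau) = +\infty$ as soon as one inner sum diverges, so it is enough to prove $S_i = +\infty$ for every $i \in \N$ and every $t < 0$.

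For the divergence of $S_i$ I would exploit the recursive description of the partition: an easy induction on $i$ shows that $A_i = \{ a^{2^{i-1}} : a \in A_1,\ a \geq 1 \}$, and since the perfect squares have density zero in $\N$ the $m$-th element of $A_1$ is asymptotic to $m$. Substituting $n = a^{2^{i-1}}$ with $a$ running through $A_1$, and absorbing multiplicative constants coming from $k_i$, one obtains
$$S_i \asymp (2^{i-1})^t \sum_{m = 2}^\infty \frac{(\log m)^t}{m^{1/2}} = (2^{i-1})^t \sum_{m=2}^\infty \frac{1}{m^{1/2}(\log m)^{|t|}},$$
where the rewriting uses $t < 0$. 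The resulting Bertrand-type series diverges by the integral test: the exponent $1/2 < 1$ on $m$ cannot be compensated by any power of $\log m$ in the denominator. Hence $S_i = +\infty$, $P(t\Delta_g - s_i\tau) = +\infty > 0$, and the lemma follows on taking the maximum over $i$.

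The only part requiring genuine care is the asymptotic comparison of $A_i$ with $\{m^{2^{i-1}}\}$ together with the divergence of the Bertrand series; all remaining steps are direct substitutions into the pressure formula already established before the statement of the lemma.
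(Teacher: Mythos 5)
Your proof is correct and takes essentially the same route as the paper: substitute the candidate value $s_i=\frac{1}{2^i}+c_it$ into the closed formula for $P(t\Delta_g-s\tau)$, observe that the $i$-th inner sum diverges so that this pressure is $+\infty$, and conclude $P_{\Phi}(tg)\geq s_i$ via the $\inf$/$\sup$ characterisation. The only differences are cosmetic: you actually justify the divergence (via the identification $A_i=\{a^{2^{i-1}}:a\in A_1\}$ and the Bertrand-type series $\sum m^{-1/2}(\log m)^{-|t|}$), which the paper merely asserts, and you work at the exact exponent $s_i-c_it=1/2^i$ whereas the paper perturbs to $\frac{1}{2^{i-1}}+c_it-\delta$ and lets $\delta\to 0$.
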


\begin{proof}
Let $t\leq 0$ and $\delta>0$ and consider $s= \frac{1}{2^{i-1}}+c_it- \delta$. We have that
\begin{equation*}
\sum_{n \in A_i} \frac{\left(\log(n+k_i)\right)^t}{(n+k_i)^{s-c_it}}=
\sum_{n \in A_i} \frac{\left(\log(n+k_i)\right)^t}{(n+k_i)^{\frac{1}{2^{i-1}}- \delta}}= \infty.\end{equation*}
Therefore $P(t\Delta_g -s\tau)= \infty$. Since the above holds for any positive value of $\delta$, for every $i \in \N$ we have that
$$\frac{1}{2^i}+c_it \leq P_{\Phi}(tg).$$
\end{proof}
%
%\begin{proof}
%Let $t\leq 0$ and $\delta>0$. For $s \geq\frac{1}{2^i}+c_it+\delta$, where $i\in\N$, we have that
%$$\sum_{j\in A_i}\frac{\log(j+k_i)^t}{(j+k_i)^{s-c_it}}\geq \sum_{j\in A_i}\log(j+k_i)^t\left(\frac{1}{j+k_i}\right)^{2^i}=\infty$$
%and thus $P(t\Delta_g-s\tau)=\infty$. This impies that {\bf{GI: fixed the inequality}}
%$$\frac{1}{2^i}+c_it \leq P_{\Phi}(tg)$$
% for any $i \in \N$.
%\end{proof}

\begin{lema}\label{ki}
We can choose values $k_i \in \R$ such that for $t\leq -\frac{3}{2}$
$$P_{\Phi}(tg)=\max_{i \in \N} \left\{\frac{1}{2^i} +c_it \right\}.$$
\end{lema}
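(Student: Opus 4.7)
My plan is to use the variational formula $P_{\Phi}(tg)=\inf\{s\in\R:P(t\Delta_g-s\tau)\leq 0\}$ from Theorem \ref{pres}. Lemma \ref{Plb} already provides the lower bound $P_{\Phi}(tg)\geq M(t):=\max_{i\in\N}\{2^{-i}+c_it\}$, so it only remains to establish the matching upper bound. By continuity of $s\mapsto P(t\Delta_g-s\tau)$, this reduces to showing $P(t\Delta_g-M(t)\tau)\leq 0$. Since $\Delta_g$ and $\tau$ are locally constant, Remark \ref{formula} converts this into
$$\sum_{i=1}^{\infty} S_i(M(t),t)\leq 1, \qquad S_i(s,t):=\sum_{n\in A_i}\frac{(\log(n+k_i))^t}{(n+k_i)^{s-c_it}}.$$

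The core of the argument is to choose the constants $k_i$ growing sufficiently rapidly but independently of $t$, so that $S_i(M(t),t)\leq 2^{-i}$ holds uniformly for every $i\in\N$ and every $t\leq-3/2$; summing a geometric series then yields the desired inequality. To bound $S_i(M(t),t)$ I would split the sum at $n=k_i$. The contribution from $n\leq k_i$ is controlled using the monotone pointwise bound $(\log(n+k_i))^t\leq(\log k_i)^t$ (valid since $t<0$), the inequality $(n+k_i)^{s-c_it}\geq k_i^{s-c_it}$, and the counting bound for $|A_i\cap[0,k_i]|$ coming from the recursive construction of $A_i$; after using $M(t)\geq 2^{-i}+c_it$ this yields an estimate of order $(\log k_i)^{-|t|}\leq(\log k_i)^{-3/2}$. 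The contribution from $n>k_i$ is controlled by parameterizing $A_i$ through its recursive definition from $A_1$ and applying the integral comparison $\int_{K_i}^{\infty}(\log x)^t/x\,\mathrm{d}x=(\log K_i)^{t+1}/|t+1|$ (finite for $t<-1$); for $|t+1|\geq 1/2$ and $\log K_i\geq 1$ this integral is at most $2(\log K_i)^{-1/2}$, and multiplying by the global substitution factor $2^{-(i-1)|t|}\leq 2^{-3(i-1)/2}$ gives a contribution of order $2^{-i}(\log k_i)^{-1/2}$. Combining, $S_i(M(t),t)\leq C(\log k_i)^{-1/2}$ with $C$ independent of $t$ and $i$, so choosing, for instance, $\log k_i$ to grow like $4^i$ suffices.

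The principal obstacle is precisely the uniformity of the estimate in $t$: one sequence $\{k_i\}$ must accommodate every $t\in(-\infty,-3/2]$ simultaneously. The restriction $t\leq-3/2$ enters in two essential places, namely it secures $|t+1|\geq 1/2$ so that the integral bound above is usable, and it supplies $2^{-(i-1)|t|}\leq 2^{-3(i-1)/2}$ which gives enough geometric decay in $i$ to absorb the combinatorial factor coming from the substitution parameterizing $A_i$. For $t$ closer to $0$ the borderline integral diverges, which is consistent with the underlying expectation that $P_{\Phi}(tg)$ genuinely exceeds $M(t)$ in intermediate regimes---this being precisely the mechanism that will produce the countably many phase transition points that this example is designed to exhibit.
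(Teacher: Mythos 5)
Your overall strategy coincides with the paper's: both arguments take the lower bound from Lemma \ref{Plb} and obtain the matching upper bound by choosing the $k_i$ so that each block sum $S_i$ is at most $2^{-i}$ uniformly in $i$ and in $t\le -3/2$, then summing the geometric series. Where you differ is in the mechanism for that uniform bound. The paper calibrates $k_i$ against the single convergent series $\sum_{n\in A_i}(n+k_i)^{-2^{-(i-1)}}(\log(n+k_i))^{-(1+\delta_i)}$ with $0<\delta_i<1/2$, and then observes that for $t\le -3/2$ and any exponent $\alpha\ge 2^{-(i-1)}$ every term of $S_i$ is dominated termwise by the corresponding term of that series; no head/tail splitting or integral comparison is needed. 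Your explicit computation is a legitimate alternative in principle, but as written it has a quantitative gap.

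The gap is in the exponent bookkeeping. Since $A_i=\{m^{2^{i-1}}:m\in A_1\}$, one has $|A_i\cap[0,k_i]|\asymp k_i^{2^{-(i-1)}}$, so your head estimate is of order $k_i^{2^{-(i-1)}}\cdot k_i^{-(M(t)-c_it)}(\log k_i)^{t}$. The inequality $M(t)-c_it\ge 2^{-i}$ that you invoke only yields $k_i^{2^{-(i-1)}-2^{-i}}(\log k_i)^{t}=k_i^{2^{-i}}(\log k_i)^{t}$, which tends to infinity with $k_i$ rather than to zero; the claimed $(\log k_i)^{-|t|}$ bound requires $M(t)-c_it\ge 2^{-(i-1)}$, the true critical exponent of $\sum_{n\in A_i}n^{-\alpha}$. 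The tail needs the same normalization, since the comparison with $\int (\log x)^{t}x^{-1}\,\text{d}x$ uses $\alpha\cdot 2^{i-1}\ge 1$. But $M(t)\ge 2^{-(i-1)}+c_it$ does not follow from $M(t)=\max_j\{2^{-j}+c_jt\}$: for instance at $i=1$, $t=-3/2$ one computes $M(t)-c_1t<1=2^{-(1-1)}$, and then $S_1(M(t),t)=\infty$ because $A_1$ has positive density. To be fair, the paper's own proof asserts ``$s-c_it>2^{-(i-1)}$'' at exactly the same point without justification, so this appears to be an indexing slip in the statement of the lemma itself (the maximum should presumably involve $2^{-(i-1)}+c_it$); but your argument does not close as written, and the failure is not merely notational, since at the exponent you actually have available both your head and tail estimates diverge.
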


\begin{proof}
For each $i \in \N$ we fix $0<\delta_i<\frac{1}{2}$ and choose $k_i$ such that
$$\sum_{n \in A_i}\frac{1}{(n+k_i)^{2^{-(i-1)}}}\log(n+k_i)^{-(1+\delta_i)}<\frac{1}{2^i}.$$
Thus for any $t<-\frac{3}{2}$ and $\alpha \geq \frac{1}{2^{i-1}}$ we have that
$$\sum_{n \in A_i}\frac{1}{(n +k_i)^{\alpha}}(\log(n+k_i))^t<\frac{1}{2^i}.$$

Choose $n\in \N$ such that $\frac{1}{2^n}-c_nt=\max_i\{\frac{1}{2^i}-c_it\}$ and $s>\frac{1}{2^n}-c_nt$. In particular,  for every $i \in \N$ we have that
$s-c_i t> 2^{-(i-1)}$. Therefore, if we fix $t<-3/2$ we have that

\[ \sum_{n \in A_i}\frac{\left(\log(n+k_i)\right)^t}{(n+k_i)^{s-c_it}} \leq
 \sum_{n \in A_i}\frac{\left(\log(n+k_i)\right)^t}{(n+k_i)^{1/2^{i-1}}} \leq \frac{1}{2^i}.\]

Thus,

\begin{eqnarray*}
P(t\Delta_g-s\tau)= \log \left( \sum_{i=1}^{\infty}\sum_{n \in A_i}\frac{\left(\log(n+k_i)\right)^t}{(n+k_i)^{s-c_it}} \right) \leq \log \left( \sum_{i=1}^{\infty} \frac{1}{2^i}  \right) < 0.
\end{eqnarray*}
%

%for $x\in C_i$ and $i\in A_i$ we have
%$$e^{t\Delta_g-s\tau(x)}<\frac{1}{(j+k_i)^{2^{-(i-1)}}}\log(j+k_i)^{t}<\frac{1}{2^i}.$$
%Thus for any $t<-2/3$ and $s>\frac{1}{2^j}-c_jt$ for all $i$
%$$\sum_{j\in A_i}e^{t\Delta_g-s\tau(\overline{j})}<\frac{1}{2^i}$$
%and so
%$$P(t\Delta_g-s\tau)<0.$$
Therefore,  $P_{\Phi}(tg)<s$. Thus combining this with Lemma \ref{Plb} we have that for $t\leq -3/2$
$$P_{\Phi}(tg)=\max_{i \in \N} \left\{\frac{1}{2^i}-c_it \right\}.$$
\end{proof}

Thus if we choose a sequence $(k_i)_i$ as in Lemma \ref{ki} we have that for $t\leq-\frac{3}{2}$
$$P_{\Phi}(tg)=\max_i \left\{\frac{1}{2^i}-c_it \right\}$$
where $c_0=0$, $c_1=-\frac{1}{4}$ and for $i>1$ we define $c_{i+1}=c_i-\frac{1}{2^{i}(i+1)}$. We can then see that for $i\geq 1$ for $t\in [i+1,i+2]$ we have that
$$P_{\Phi}(tg)=\frac{1}{2^i}-c_it.$$
Thus, for $t\leq -\frac{3}{2}$ we have that $t\to P_{\Phi}(tg)$ is a piecewise linear function which is non-differentiable at integer values., That is, the pressure function exhibits a countable number of first order phase transitions.
\end{eje}

\section{Remarks} \label{rem}
This last section is divided in two. In the first part we illustrate how the the results obtained in the symbolic system can be applied to flows defined over manifolds. The particular example we consider is the positive geodesic flow. In the second part we discuss a strategy to study suspension flows for which the combinatorial assumption we have made so far on the shift (that of being BIP) is not satisfied. The procedure we propose is that of inducing.

\subsection{The positive geodesic flow on the modular surface}

%
%Let us begin with some basic definitions (for more details we refer to the book \cite{ka} and the article \cite{ku}).
 Denote by $\mathcal{H}=\{z \in \mathbb{C}: \text{Im }  z >0  \}$ the upper half-plane endowed with the hyperbolic metric. Geodesics in $\mathcal{H}$ are either semi-circles which meet the boundary perpendicularly or vertical straight lines.
The geodesic flow of $\mathcal{H}$, denote by $\{\overline{\psi}_t\}$, is the flow on the unit tangent bundle,
 $T^1\mathcal{H}$, of $\mathcal{H}$ which moves $\omega \in T^1\mathcal{H}$ along the geodesic it determines at unit speed.

The group of M\"obius transformations acting on $\mathcal{H}$ by orientation preserving isometries can be identified with the group $ \text{PSL}(2, \mathbb{Z})$. The \emph{modular surface} is defined by $M=\text{PSL}(2, \mathbb{Z})  \backslash  \mathcal{H}$, which is a (non-compact) surface of  constant negative curvature. Topologically, is a sphere with one cusp and two singularities. Note that $T^1\mathcal{H}$ can be identified with $\text{PSL}(2, \mathbb{R})$ by sending $\omega=(z, \zeta) \in T^1\mathcal{H} $ onto the unique $ g \in  \text{PSL}(2, \mathbb{R})$ such that $z=g(i)$ and $\zeta=g'(z)(\iota)$, where $\iota$ is the unit vector at the point $i$ to the imaginary axis pointing up. In this coordinate system the geodesic flow takes the form
\begin{equation*}
\overline{\psi}_t
\left(  \begin{array}{cc}   a & b \\ c & d \end{array} \right) = \left(  \begin{array}{cc}   a & b \\ c & d \end{array} \right) \left(  \begin{array}{cc}   e^{t/2} & 0 \\ 0 & e^{-t/2} \end{array} \right).\end{equation*}
The geodesic flow  $\{\overline{\psi}_t\}$ on $\mathcal{H}$ descends to the geodesic flow
 $\{\psi_t\}$ on $M$ via the projection $\pi:T^1\mathcal{H} \mapsto T^{1}M$. We will be interested in an invariant sub-system of  $\{\psi_t\}$. In order to define it we need to
 consider the fundamental region for $\text{PSL}(2, \mathbb{Z})$ given by
\begin{equation*}
F=\{z \in \mathcal{H} : |z| \geq 1, | \text{Re } z | \leq 1/2 \},
\end{equation*}
whose sides are identified by the generators of  $\text{PSL}(2, \mathbb{Z})$, $T(z)=z+1$ and $S(z)=-1/z$ (see \cite[p.55]{ka}). Any geodesic can be represented by a series of segments in $F$. We will only be interested in oriented geodesic that do not go to the cusps of $M$ in either direction.

There are several ways of coding geodesics. Here we present two of them.

{\bf{The geometric code.}} As we just saw, the sides of $F$ are identified with the generators of $\text{PSL}(2, \mathbb{Z})$,
$T(z)= z+1$ and $S(z)= -1/z$. The \emph{geometric code} (also known as Morse code) of a geodesic in $F$ (that does not go to the cusp in either direction)
is a bi-infinite sequence of integers. The idea is to code the geodesic by recording the sides of the region $F$ that are cut by the geodesic.
The boundary of $F$ has three sides, the left and right vertical sides are labeled by $T$ and $T^{-1}$, respectively. The circular boundary is labeled by $S$.  Any oriented geodesic, that does not go to the cusp, returns to the circular boundary of $F$ infinitely often. The geometric code is obtained as follows. Choose an initial point on the circular boundary of $F$ and count the number of times it hits the vertical boundary of $F$ moving  in the direction of the geodesic. We assign a positive integer to each block of hits of the right vertical side and a negative number to the left vertical side. if we move the initial point in the opposite direction we obtain a sequence of nonzero integers
\[ [\gamma]=[ \dots ,n_{-1}, n_0, n_1, \dots ] \]
that we denote \emph{geometric code}.

{\bf{The arithmetic code.}} An oriented  geodesic $\gamma \in \mathcal{H}$ is called \emph{reduced} if its endpoints
$u,v$ satisfy $0<u<1$ and $w>1$. Recall that  a geodesic which does not go to the cusp in either direction is such that its end points are irrationals.  Consider the minus continued fraction associated to $u$ and $v$,
\begin{equation*}
w= \textrm{ } n_1 +\cfrac{1}{n_2 - \cfrac{1}{n_3 - \cfrac{1}{n_4 - \dots}}} \text{ , }  \frac{1}{u} = \textrm{ } n_0 -\cfrac{1}{n_{-1} - \cfrac{1}{n_{-2} - \cfrac{1}{n_{-3} - \dots}}} \end{equation*}
The following code is given to $\gamma$,
\[(\gamma)= ( \dots, n_{-2}, n_{-1}, n_{0}, n_1, n_2, \dots).\]
Now, it is possible to show that an arbitrary geodesic $\gamma \in M$ can also be represented by doubly infinite sequence. The idea is to construct a cross section  for the geodesic flow on $T^1M$. Every oriented geodesic can be represented as a bi-infinite sequence of segments $\sigma_i$ between returns to the cross section. To each segment  it corresponds a reduced geodesic $\gamma_i$. It turns out that (see \cite{gk}) all these geodesics have the same arithmetic code, except for a shift. This sequence is the \emph{arithmetic code} of $\gamma$,
\[(\gamma)= (\dots, , n_{-2}, n_{-1}, n_{0}, n_1, n_2, \dots).\]

%\subsection{The positive geodesic flow}
\begin{defi}
A geodesic $\gamma$ is called \emph{positive} if  all segments comprising the geodesic $\gamma$  in F are positively (clockwise) oriented. The set of vectors in $T^1M$ tangent to positive geodesics is a  non-compact invariant set of the geodesic flow on $T^1M$. We call the restriction of  the geodesic flow to this set the \emph{positive geodesic flow}.
\end{defi}
It was shown in \cite{gk} that a geodesic $\gamma$ is positive if and only if the arithmetic and the geometric codes for $\gamma$ coincide.
%Positive geodesic behave very well with respect to the arithmetic and geometric codes. Indeed, Gurevich and Katok \cite{gk} proved the following result which is a generalisation of earlier results by Katok \cite{k},

%
%\begin{teo}[Gurevich-Katok] A geodesic $\gamma$ is positive if and only if the arithmetic and the geometric codes for $\gamma$ coincide.
%\end{teo}

%
%It was shown by Gurevich and Katok in \cite{gk} that the positive geodesic flow on the modular surface has a symbolic representation  as a suspension flow. In this section we recall this construction and describe some of its properties.
Constructing an appropriate cross section and computing the first return time function of the geodesic flow to it, Gurevich and Katok \cite{gk} showed that the positive geodesic flow on the modular surface can be represented by the suspension flow  $(Y^*, \Phi^*)$. This flow is defined over the countable (two-sided) Markov shift $(\Sigma_A^*, \sigma)$, where
\[\Sigma_A^*:=\left\{ (x_n)_{n \in \Z} : A(x_n, x_{n+1})=1  \text{ for every } n \in \Z \right\}.\]
Here $A$ denotes  the transition matrix defined by $A(i,j)=1$ for every pair $(i,j) \in \{3,4,5,6,\dots \} \times\{3,4,5,6,\dots \}$ except for the pairs  corresponding to the five platonic bodies, that is
$(i,j) \in \{(3,3) , (3,4), (3,5), (4,3), (5,3) \}$, where $A(i,j)=0$.  The roof function is given by $\tau^*(x)= 2 \log w(x)$, where
\[w(x)= \textrm{ } n_1 +\cfrac{1}{n_2 - \cfrac{1}{n_3 - \cfrac{1}{n_4 - \dots}}}, \]
for $x=(\dots n_{-1}, n_0, n_1, n_2 , n_3 \dots)$.
Since every potential $\overline{\rho}:\Sigma_A^* \to \R$ is cohomologous to a potential
 $\rho:\Sigma_A^* \to \R$ that only depends on \emph{future} coordinates, we can reduce the study of the suspension flow to that of the suspension semi-flow.  Therefore, in order to study the ergodic theory of the positive geodesic flow it is enough to understand the ergodic theory of the suspension semi-flow $(Y,\Phi)$ defined over the one-sided countable Markov shift $(\Sigma_A, \sigma)$ with height function $\tau:\Sigma_A \to \R$ defined by $\tau(x)= 2\log w(x)$. Note that $(\Sigma_A, \sigma)$ satisfies the BIP property and that if $c= (3 + \sqrt{5})/6$ then for $x=(\dots n_{-1}, n_0, n_1, n_2 , n_3 \dots)$ we have that $2 \log (c n_1) \leq \tau(x) \leq 2 \log n_1$.

Since we want to study the thermodynamic formalism, we start by defining the class of potentials that we will consider. Denote by
\begin{equation*}
\mathcal{P}:= \left\{g:Y \to \R: \text{the potential } \Delta_g \text{ is locally H\"older}   \right\}.
\end{equation*}

In the next Theorem we establish conditions under which a potential has an equilibrium measure. This condition is a \emph{small oscillation} type of condition. This result first appeared in the note not intended for publication \cite{I2}.

\begin{teo}
Let $g \in \mathcal{P}$ be a bounded potential with $P_{\Phi}(g) < \infty$. If
\begin{equation} \label{va}
\sup g - \inf g < h_{top}(\Phi) - \frac{1}{2},
\end{equation}
then $g$ has an equilibrium measure $\mu_g    \in \M_{\Phi}$.
 \end{teo}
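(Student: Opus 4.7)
The plan is to verify the two hypotheses of Theorem \ref{faa}: that $P(\Delta_g - P_{\Phi}(g)\tau) = 0$ and that there is an equilibrium measure $\nu_g \in \M_\sigma(\tau)$ for the shift potential $\Delta_g - P_{\Phi}(g)\tau$. The small oscillation condition \eqref{va} is designed precisely to place $P_{\Phi}(g)$ strictly inside the region where $t \mapsto P(\Delta_g - t\tau)$ is finite, with enough margin left over to make $\tau$ integrable against the resulting Gibbs state.

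The first step is to estimate the threshold $t^* := \inf\{t \in \R : P(\Delta_g - t\tau) < \infty\}$. Using the two-sided bound $2\log(cn_1) \leq \tau(x) \leq 2\log n_1$ on the cylinder with leading symbol $n_1$, together with $|\Delta_g(x)| \leq \|g\|_\infty \tau(x)$ (valid since $g$ is bounded), one computes
\[ \sup_{C_n}(\Delta_g - t\tau) \leq 2(\sup g - t)\log n + O(1) \quad \text{for } t > \sup g. \]
On BIP shifts, finiteness of $P(\Delta_g - t\tau)$ for locally H\"older potentials is equivalent to convergence of $\sum_n \exp\bigl(\sup_{C_n}(\Delta_g - t\tau)\bigr)$, which by the above bound holds whenever $t > \sup g + 1/2$; hence $t^* \leq \sup g + 1/2$. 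On the other hand, the variational principle for $\Phi$ yields $P_{\Phi}(g) \geq h_{top}(\Phi) + \inf g$, so the hypothesis \eqref{va} gives
\[ P_{\Phi}(g) \geq h_{top}(\Phi) + \inf g > \sup g + \tfrac{1}{2} \geq t^*. \]

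Because $P_{\Phi}(g) > t^*$, the pressure $t \mapsto P(\Delta_g - t\tau)$ is finite and (being convex) continuous in a neighbourhood of $t = P_{\Phi}(g)$. Combined with the variational characterisations $P_{\Phi}(g) = \inf\{t : P(\Delta_g - t\tau) \leq 0\} = \sup\{t : P(\Delta_g - t\tau) \geq 0\}$ from Theorem \ref{pres}, this forces $P(\Delta_g - P_{\Phi}(g)\tau) = 0$. Theorem \ref{bip} then provides a Gibbs measure $\nu_g$ for $\Delta_g - P_{\Phi}(g)\tau$, and the Gibbs property together with the bound above yields
\[ \nu_g(C_n) \leq C\exp\bigl(\sup_{C_n}(\Delta_g - P_{\Phi}(g)\tau)\bigr) \leq C' n^{2(\sup g - P_{\Phi}(g))}, \]
with exponent strictly less than $-1$. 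Consequently $\int \tau\, d\nu_g \leq \sum_n (2\log n)\nu_g(C_n) < \infty$, so $\nu_g \in \M_\sigma(\tau)$.

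Finally, I would confirm that $\nu_g$ is genuinely the equilibrium measure for the shift potential: integrability of $\tau$ lets me form $\mu := R(\nu_g) \in \M_{\Phi}$ via Ambrose--Kakutani, and Abramov's formula combined with $h_{\Phi}(\mu) \leq h_{top}(\Phi) < \infty$ gives $h_\sigma(\nu_g) < \infty$; together with $|\Delta_g| \leq \|g\|_\infty \tau$ this identifies $\nu_g$ as the unique equilibrium state for $\Delta_g - P_{\Phi}(g)\tau$ on the BIP shift. Theorem \ref{faa} then delivers the desired $\mu_g \in \M_{\Phi}$. The main technical hurdle is locating $P_{\Phi}(g)$ strictly above $t^*$, where the $1/2$ correction in \eqref{va} is not arbitrary but reflects the interplay between the logarithmic growth of $\tau$ on cylinders and the polynomial decay of the Gibbs mass needed to integrate $\tau$.
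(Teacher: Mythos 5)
Your argument is correct and follows essentially the same route as the paper: bound $\Delta_g$ between $(\inf g)\,\tau$ and $(\sup g)\,\tau$, use the oscillation hypothesis to place $P_{\Phi}(g)$ strictly inside the region where $t\mapsto P(\Delta_g-t\tau)$ is finite (above the threshold $\sup g+\tfrac12$ coming from $s_{\infty}=\tfrac12$) while still positive somewhere, deduce $P(\Delta_g-P_{\Phi}(g)\tau)=0$, and then verify that the associated Gibbs state integrates $\tau$. The only divergence is in that last step, where the paper invokes real analyticity of the pressure on a neighbourhood of $P_{\Phi}(g)$ and the derivative formula, whereas you estimate $\nu_g(C_n)\le C' n^{2(\sup g-P_{\Phi}(g))}$ directly from the Gibbs property and sum against $\sup_{C_n}\tau\le 2\log n$; both are valid.
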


\begin{proof}
Let us first show that $P_\sigma(\Delta_g -P_{\Phi}(g)\tau)=0$.
Denote by $s= \inf F$ and by $S=\sup F$. We have that $s\tau\le\Delta_g\le S\tau$. Moreover,
\[
P_\sigma((s-t)\tau) \le P_\sigma(\Delta_F -t\tau) \le
P_\sigma((S-t)\tau).
\]
Let $t^* \in \left(1/2 +S , h_{top}(\Phi)+s \right)$ (by equation \eqref{va} this interval is non degenerate). We have that
\[
0< P_\sigma((s-t^*)\tau) \le P_\sigma(\Delta_g -t^*\tau) \le
P_\sigma((S-t^*)\tau) < \infty.
\]
Since $P_{\Phi}(g) < \infty$ and by the continuity of the function $t \to  P_\sigma(\Delta_g -t\tau)$ we have that $P_\sigma(\Delta_g -P_{\Phi}(g)\tau)=0$.

It remains to show that the potential $\Delta_g -P_{\Phi}(g)\tau $ has an equilibrium measure. Recall that there exists a Gibbs measure $\mu$ associated to this potential. In order to show that this measure is an equilibrium measure it suffices to prove that
\begin{equation} \label{der}
\int \left(  \Delta_g -P_{\Phi}(g)\tau \right) \text{d}\mu < \infty.
\end{equation}
But note that \cite[Chapter 4]{PU}
\[ \frac{\text{d}}{\text{d}t}  P_\sigma(\Delta_g -t\tau) \Big{|}_{t=P_{\Phi}(g)}=  \int \left(  \Delta_g -P_{\Phi}(g)\tau \right) \text{d}\mu. \]
Note that we have proved that there exists an interval of the form $[P_{\phi}(g)-\epsilon, P_{\phi}(g)+\epsilon]$ where the function $t \to P_\sigma(\Delta_g -t\tau)$ is finite.
The result now follows, because when finite the function $t \to P_\sigma(\Delta_g -t\tau)$ is real analytic.
\end{proof}

Let us remark that in a  wide range of (discrete time) settings, similar small oscillation conditions on the potential have been imposed in order to prove existence and uniqueness of equilibrium measures. For instance, Hofbauer \cite{ho} originally in a symbolic setting  and later Hofbauer and Keller   \cite{hk}
in the context of the angle doubling map on the circle, gave examples which shows that  this type of condition was essential in their setting in order to have quasi-compactness of the transfer operator, and hence good equilibrium measures. Later, Denker and Urba\'nski \cite{du}, for rational maps, used similar conditions to prove uniqueness of equilibrium measures. Oliveira \cite{o} proved the existence of equilibrium measures for potentials satisfying similar conditions for  non-uniformly expanding maps.  Also, Bruin and Todd \cite{bt} in the context of multimodal maps  made use of a similar condition to prove uniqueness of equilibrium measures. Recently, Inoquio-Renteria and Rivera-Letelier \cite{ir} proved this type of results for rational maps under a very weak small oscillation condition.

We end up applying the results of the previous sections to obtain phase transitions in this setting. It is a direct consequence of  Theorem \ref{teo1} that,

\begin{teo}
Consider the positive geodesic flow and the associated suspension semi-flow $(Y, \Phi)$ with roof function $\tau$. Let $g \in \mathcal{P}$ be a  potential such that
\begin{equation*}
 \lim_{n \to \infty} \frac{\sup \{ \Delta_g(x): x \in C_n\}}{-\log n}=0
\end{equation*}
We then have the following situation
\begin{enumerate}
\item\label{part1*}
For all $t \in \R$ we have that  $P_{\Phi}(tg)\geq 1/2$.
\item\label{part2*}
If for all $t\in\R$ we have that $P(t\Delta_g-(1/2)\tau)=\infty$ then there is no phase transition and the function $t\to P(tg)$ is real analytic.
\item\label{part3*}
If $g$ is strictly positive and there exists $t^*$ such that $P(t^*\Delta_g-(1/2)\tau)<\infty$ then there exists $t_0 \in \R$  such that $t_0=\sup\{t:P(t_0\Delta_g-(1/2)\tau)\leq 0\}$, and
\begin{eqnarray*}
P_{\Phi}(tg)=
\begin{cases}
\text{real analytic and strictly convex } & \text{ if } t>t_0;\\
\frac{1}{2}& \text{ if } t \leq t_0.
\end{cases}
\end{eqnarray*}
\end{enumerate}
\end{teo}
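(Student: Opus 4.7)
The plan is to verify that this statement follows directly from Theorem \ref{teo1} applied to the symbolic model $(\Sigma_A,\sigma,\tau)$ for the positive geodesic flow. For this, two things must be checked: (a) the threshold $s_\infty$ equals $1/2$ for this specific $\tau$, and (b) the asymptotic hypothesis on $g$ translates into the form required by Theorem \ref{teo1}. Once (a) and (b) are in place, the three conclusions transcribe verbatim from Theorem \ref{teo1} with $s_\infty=1/2$, and the strict positivity of $g$ in part \ref{part3*} yields $\Delta_g>0$ since $\tau>0$.

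For (a), the base shift $(\Sigma_A,\sigma)$ satisfies BIP, and the estimate $2\log(cn_1)\le\tau(x)\le 2\log n_1$ holds whenever $x\in C_{n_1}$, with $c=(3+\sqrt{5})/6$. Comparing $P(-s\tau)$ with the Gurevich pressure of the locally constant potential $x\mapsto -2s\log x_0$ (via Remark \ref{formula} applied to the induced full-shift on a sub-alphabet, and noting that removing the five platonic transitions affects the relevant series only by a bounded factor), and then invoking Theorem \ref{bip}, one obtains that $P(-s\tau)$ is finite if and only if $\sum_n n^{-2s}$ converges, that is, for $s>1/2$. Hence $s_\infty=1/2$.

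For (b), the lower bound in (a) gives $\inf\{\tau(x):x\in C_n\}\sim 2\log n$ as $n\to\infty$. Therefore the assumed condition
\begin{equation*}
\lim_{n\to\infty}\frac{\sup\{\Delta_g(x):x\in C_n\}}{-\log n}=0
\end{equation*}
is equivalent to the cylinder ratio limit $\sup\{\Delta_g(x):x\in C_n\}/\inf\{\tau(x):x\in C_n\}\to 0$ appearing in Theorem \ref{teo1}. The accompanying infimum condition on invariant measures follows by testing against periodic measures concentrated on orbits visiting high-index cylinders with asymptotically full frequency (available by BIP): for such measures $\int\Delta_g\,d\nu/\int\tau\,d\nu$ is controlled by the cylinder ratio and tends to zero. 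Under the strict positivity assumption of part \ref{part3*} the complementary lower bound $\ge 0$ is immediate, and in parts \ref{part1*}--\ref{part2*} what is actually used from Theorem \ref{teo1} is only the argument of Lemma \ref{l_bound}, which relies on the $s_\infty$ identification in (a) together with the boundedness of $\Delta_g/\tau$ (exactly as in the proof of Lemma \ref{1}).

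With $s_\infty=1/2$ secured and the ratio hypotheses translated, parts \ref{part1*}--\ref{part3*} are immediate applications of the corresponding parts of Theorem \ref{teo1}. The main obstacle, modest as it is, lies in step (a): one must verify carefully that the multiplicative constants in the two-sided bound on $\tau$ and the finite combinatorial restriction imposed by excluding the five platonic transitions do not shift the abscissa of convergence away from $1/2$. Once this localization is in hand the rest is bookkeeping.
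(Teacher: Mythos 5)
Your proposal is correct and follows exactly the route the paper intends: the paper offers no written proof beyond asserting that the theorem ``is a direct consequence of Theorem \ref{teo1}'', and your verification that $s_{\infty}=1/2$ (via the two-sided bound $2\log(cn_1)\le\tau(x)\le 2\log n_1$ and the abscissa of convergence of $\sum_n n^{-2s}$) together with the translation of the ratio hypothesis is precisely the bookkeeping being left to the reader. Your additional care about the infimum-over-measures hypothesis (automatic in part~3 from positivity, not actually needed for parts~1--2) is a point the paper glosses over entirely, so no discrepancy arises.
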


\subsection{Inducing schemes}
The results discussed in the previous sections hold under a combinatorial assumption on the base shift, namely that it satisfies the BIP condition. A well known procedure to deal with systems not having this property is to induce. Let $(\Sigma, \sigma)$ be a topologically mixing countable Markov shift.  Fix a symbol in the alphabet, say $a \in \N$.
The \emph{induced system} over the cylinder $C_a$ is the full-shift defined on the alphabet
\begin{equation*}
\left\{ C_{ai_1 \dots i_m} : i_j \neq a \text{ and }  C_{ai_1 \dots i_ma} \neq \emptyset \right\}.
\end{equation*}
The \emph{first return time map} to the cylinder $C_a$ is defined by
\[ r_a(x):= \chi_{C_a}(x) \inf \left\{ n \geq 1: \sigma^n(x) \in C_a \right\},\]
where $\chi_{C_a}(x) $ is the characteristic function of the cylinder $C_a$. For every
potential $\phi:\Sigma \to \R$ we define the induced potential by
\[\overline{\phi}:=\left( \sum_{k=0}^{r_a(x)-1} \phi \circ \sigma^k \right).\]
Note that if $\phi$ has summable variations then it is also the case for $\overline{\phi}$. In the same way, if $\phi$ is weakly H\"older then so is $\overline{\phi}$.
Since the original system is topologically mixing the pressure $P(\overline{\phi})$ does not depends on the symbol we induce, see \cite[Lemma 2]{sa2}.

\begin{lema}
Let  $(Y, \Phi)$ be a suspension semi-flow defined over  a topologically mixing countable Markov shift $(\Sigma , \sigma)$ with roof function $\tau$ of summable variations. Let $g:Y \to \R$ be such that $\Delta_g$ is of summable variations.  We have that
\begin{equation*}
P(\Delta_g - s \tau) \leq 0 \quad \text{ if and only if } \quad P(\overline{\Delta_g} - s \overline{\tau}) \leq 0.
\end{equation*}
\end{lema}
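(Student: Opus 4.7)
The plan is to reduce both sides of the asserted equivalence to the single condition $\sum_{n \geq 1} Z_n^*(\psi, a) \leq 1$, where $\psi := \Delta_g - s\tau$, the symbol $a$ is the one on whose cylinder we induce, and
\[
Z_n^*(\psi, a) \;:=\; \sum \bigl\{ e^{S_n \psi(x)} : \sigma^n x = x,\ r_a(x) = n,\ x_0 = a \bigr\}
\]
is the first-return partition function at $C_a$. First I would observe that the return time $r_a$ does not depend on the potential, so the induced potential of a linear combination is the same linear combination of induced potentials; hence $\overline{\Delta_g - s\tau} = \overline{\Delta_g} - s\overline{\tau}$, and the claim reduces to showing $P(\psi) \leq 0 \iff P(\overline{\psi}) \leq 0$ for an arbitrary summable-variation $\psi$ on a topologically mixing countable Markov shift.

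For the base side, I would invoke Sarig's discriminant criterion. Decomposing each periodic orbit based at $C_a$ into its consecutive first-return blocks yields the renewal-type identity
\[
\sum_{n \geq 1} Z_n(\psi, a)\, z^n \;=\; \frac{\sum_{n \geq 1} Z_n^*(\psi, a)\, z^n}{1 - \sum_{n \geq 1} Z_n^*(\psi, a)\, z^n}.
\]
Since $P(\psi) = \lim_n n^{-1}\log Z_n(\psi, a)$ is the exponential growth rate of the partition functions, the inequality $P(\psi) \leq 0$ is equivalent to the radius of convergence of the left-hand side being at least $1$, which in turn is equivalent to $\sum_{n \geq 1} Z_n^*(\psi, a) \leq 1$. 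For the induced side, the induced system is the full-shift on the countable alphabet of admissible return words $w = (a, i_1, \ldots, i_{|w|-1})$, and each $1$-cylinder contains a unique fixed point $w^\infty$ with $\overline{\psi}(w^\infty) = S_{|w|}\psi(w^\infty)$. Since summable variations of $\psi$ pass to $\overline{\psi}$ (as noted in the excerpt), the Ruelle operator of $\overline{\psi}$ on this full-shift satisfies
\[
L_{\overline{\psi}} 1(x) \;=\; \sum_{w} e^{\overline{\psi}(wx)} \;\asymp\; \sum_{w} e^{\overline{\psi}(w^\infty)} \;=\; \sum_{n \geq 1} Z_n^*(\psi, a)
\]
uniformly in $x$. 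Iterating and extracting the $n^{-1}\log$ limit gives $P(\overline{\psi}) = \log \sum_{n \geq 1} Z_n^*(\psi, a)$ whenever this sum is finite, and hence $P(\overline{\psi}) \leq 0 \iff \sum_{n \geq 1} Z_n^*(\psi, a) \leq 1$.

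The main technical obstacle is this last equivalence: Remark \ref{formula} gives a clean pressure formula $P(\phi) = \log \sum \lambda_n$ only for locally constant potentials on the full-shift, whereas $\overline{\psi}$ is merely of summable variations. I would close this gap using the standard Gibbs/distortion estimate, namely that summable variations imply $\overline{\psi}(wx) = \overline{\psi}(w^\infty) + O(1)$ uniformly in $w$ and $x$, so that the bounded multiplicative constants hidden in $\asymp$ survive iteration and then wash out under $n^{-1}\log$. The divergent case $\sum_{n \geq 1} Z_n^*(\psi, a) = \infty$ is handled separately by restricting to finite subalphabets and using the approximation property from Theorem \ref{pres}, which forces $P(\overline{\psi}) = \infty$ and $P(\psi) > 0$ simultaneously, so neither pressure is $\leq 0$ and the equivalence is vacuously consistent.
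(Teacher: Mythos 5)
Your overall strategy --- reducing both inequalities to the single sharp condition $\sum_{n\ge1}Z_n^*(\psi,a)\le 1$ --- has a genuine gap, and it occurs at both reduction steps for the same reason: for a potential that is merely of summable variations (rather than locally constant), neither the renewal identity nor the formula $P(\overline{\psi})=\log\sum_{n\ge1}Z_n^*(\psi,a)$ is exact. When you decompose a periodic orbit based at $C_a$ into its $k$ first-return blocks, replacing the Birkhoff sum over each block by its value at that block's own periodic point costs up to $V:=\sum_{m\ge1}V_m(\psi)$ \emph{per block}; since $k$ is precisely the time parameter of the induced system, the total error is of order $Vk$ and therefore contributes an additive constant of order $V$ to the induced pressure --- it does not ``wash out under $n^{-1}\log$''. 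All one can extract is $\bigl|P(\overline{\psi})-\log\sum_{n}Z_n^*(\psi,a)\bigr|\le \sum_{m}V_m(\overline{\psi})$, and likewise the radius of convergence of $\sum_n Z_n(\psi,a)z^n$ is only pinned between the roots of $e^{-V}F(z)=1$ and $e^{V}F(z)=1$. An additive $O(1)$ error is fatal here because the lemma concerns the \emph{sign} of the pressure, not its value up to a constant: nothing prevents $P(\psi)\le 0$ and $P(\overline{\psi})\le 0$ while $\sum_n Z_n^*(\psi,a)>1$, so your pivot condition is not equivalent to either side. To see that $P(\overline{\psi})=\log\sum_w e^{\overline{\psi}(w^\infty)}$ genuinely fails beyond the locally constant case, take $\overline{\psi}$ depending on two induced coordinates, $\overline{\psi}(y)=\phi(w_1,w_2)$: then $P(\overline{\psi})$ is the logarithm of the Gurevich spectral radius of the matrix $M_{vw}=e^{\phi(v,w)}$ while $\sum_w e^{\overline{\psi}(w^\infty)}$ is its trace, and (already for $2\times 2$ matrices with large off-diagonal and small diagonal entries) these lie on opposite sides of $1$. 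This is exactly why Remark \ref{formula} is stated only for locally constant potentials. Your treatment of the divergent case $\sum_n Z_n^*(\psi,a)=\infty$ does survive the distortion, but that is the easy case.

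The paper sidesteps all of this by arguing with measures rather than partition functions: by the approximation property (Theorem \ref{pres}), strictly positive pressure on either side is witnessed by a compactly supported invariant measure, and for such a measure the Abramov--Kac formulas give \emph{exact} identities $h(\nu)=h(\mu)/\int r\,\text{d}\mu$ and $\int\Delta_g\,\text{d}\nu=\int\overline{\Delta_g}\,\text{d}\mu/\int r\,\text{d}\mu$, $\int\tau\,\text{d}\nu=\int\overline{\tau}\,\text{d}\mu/\int r\,\text{d}\mu$ between an induced measure $\mu$ and its projection $\nu$. Hence the sign of $h+\int(\Delta_g-s\tau)$ is preserved exactly under inducing, with no distortion constants to control. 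If you wish to keep a partition-function proof, you would need Sarig's discriminant machinery from \cite{sa2} (the relation $P\bigl(\overline{\Delta_g}-s\overline{\tau}-p\,r_a\bigr)=0$ at $p=P(\Delta_g-s\tau)$), whose proof is itself of this variational type rather than a naive renewal computation.
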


\begin{proof}
Assume that there exists $A>0$ such that
\[  P(\overline{\Delta_g} - s \overline{\tau}) \geq A >0.\]
By the approximation property, this means that for every $\epsilon >0$ there exists a compact invariant  set  $\Sigma_n \subset \Sigma$ such that
\[ P_{\Sigma_n}(\overline{\Delta_g} - s \overline{\tau}) > A - \epsilon >0.\]
In particular there exists an invariant measure in $\mu_n \in \Sigma_n$ such that
\[h(\mu_n) +\int \overline{\Delta_g} \text{d}\mu_n - s \int \overline{\tau} \text{d}\mu_n > A - 2\epsilon >0.\]
Since the measure $\mu_n$ is supported on the compact set $\Sigma_n$ the return time function is such that $r \in L^1(\mu_n)$. Therefore
\begin{eqnarray*}
\frac{h(\mu_n)}{\int r \text{d} \mu_n} +\frac{\int \overline{\Delta_g} \text{d} \mu_n} {\int r \text{d} \mu_n}- s \frac{\int \overline{\tau} \text{d} \mu_n}{\int r \text{d} \mu_n} > \frac{A - 2\epsilon}{\int r \text{d} \mu_n} >0.
\end{eqnarray*}
If we denote by $\nu_n$ the projection onto $\Sigma$ of the measure $\mu_n$ we have that
\begin{eqnarray*}
P(\Delta_g - s \tau) \geq h(\nu_n) +\int \Delta_g \text{d} \nu_n - s \int \tau \text{d} \nu_n= & \\
\frac{h(\mu_n)}{\int r \text{d} \mu_n} +\frac{\int \overline{\Delta_g} \text{d} \mu_n} {\int r \text{d} \mu_n}- s \frac{\int \overline{\tau} \text{d} \mu_n}{\int r \text{d} \mu_n} > \frac{A - 2\epsilon}{\int r \text{d} \mu_n} >0.
\end{eqnarray*}

Let us assume now that $P(\Delta_g - s \tau) > A >0$. Then by the approximation property there exists a compactly supported measure $\nu_n$ in $\Sigma$ which is ergodic, for which $\nu(C_0)>0$ and the corresponding induced measure $\mu_n$, induced on the symbol $0$,  satisfies
$\int r \text{d}\mu_n < \infty$ and
\begin{eqnarray*}
P(\Delta_g - s \tau) \geq h(\nu_n) +\int \Delta_g \text{d} \nu_n - s \int \tau \text{d} \nu_n= & \\
\frac{h(\mu_n)}{\int r \text{d} \mu_n} +\frac{\int \overline{\Delta_g} \text{d} \mu_n} {\int r \text{d} \mu_n}- s \frac{\int \overline{\tau} \text{d} \mu_n}{\int r \text{d} \mu_n} > \frac{A - \epsilon}{\int r \text{d} \mu_n} >0.
\end{eqnarray*}
Indeed, we just need to choose the measure $\nu_n$ supported on a sufficiently large  compact invariant subset of $\Sigma$.  Multiplying the last equation by $\int r d \mu_n$ we obtain
\[P(\overline{\Delta_g} - s \overline{\tau}) \geq h(\mu_n) +\int \overline{\Delta_g} \text{d} \mu_n - s \int \overline{\tau} \text{d} \mu_n > A - \epsilon >0.\]
 \end{proof}

The above Lemma implies that we can compute the value $P_{\Phi}(g)$ either on the original system $(\Sigma , \sigma)$ or in its induced version $(\overline{\Sigma}, \overline{\sigma})$.
Therefore, we can apply the results obtained in the previous sections.  We define $s_{\infty} \in \R$ as the unique value such that
\begin{equation*}
 P(-s\overline{\tau})=
\begin{cases}
\infty & \text{ if } s <s_{\infty};\\
\text{finite} & \text{ if } s > s_{\infty}.
\end{cases}
\end{equation*}

\begin{teo} \label{ind}
Let  $(Y, \Phi)$ be a suspension semi-flow defined over  a topologically mixing countable Markov shift $(\Sigma , \sigma)$ with locally H\"{o}lder roof function $\tau$. Let $g:Y \to \R$ be such that $\Delta_g$ is locally H\"{o}lder.  Assume that
\begin{equation*}
\inf \left\{ \frac{\int \overline{\Delta_g} \text{d} \nu}{\int \overline{\tau} \text{d} \nu} : \nu \in \M_{\sigma} \right\} = \lim_{n \to \infty} \frac{\sup \{ \overline{\Delta_g(x)}: x \in C_n\}}{\inf \{ \overline{\tau(x)}: x \in C_n\}}=0
\end{equation*}
We then have the following situation
\begin{enumerate}
\item\label{Part1}
For all $t \in \R$ we have that  $P_{\Phi}(tg)\geq s_{\infty}$.
\item\label{Part2}
If for all $t\in\R$ we have that $P(t\overline{\Delta_g}-s_{\infty}\overline{\tau})=\infty$ then there is no phase transition and the function $t\to P(tg)$ is real analytic.
\item\label{Part3}
If $\overline{\Delta_g}$ is a strictly positive function and there exists $t^*$ such that $P(t^*\overline{\Delta_g}-s_{\infty}\overline{\tau})<\infty$ then there exists $t_0 \in \R$ such that $t_0=\sup\{t:P(t_0\overline{\Delta_g}-s_{\infty}\overline{\tau})\leq 0\}$, and
\begin{eqnarray*}
P_{\Phi}(tg)=
\begin{cases}
\text{real analytic and strictly convex } & \text{ if } t>t_0;\\
s_{\infty}& \text{ if } t \leq t_0.
\end{cases}
\end{eqnarray*}
\end{enumerate}
\end{teo}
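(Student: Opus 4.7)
The plan is to reduce Theorem \ref{ind} to Theorem \ref{teo1} by passing to the induced system. The inducing procedure replaces the (possibly non-BIP) base $(\Sigma,\sigma)$ with the full shift $(\overline{\Sigma},\overline{\sigma})$ on the alphabet of admissible return words, which automatically satisfies the BIP condition. Thus, if I can identify the pressure function of the original suspension semi-flow with that of a suspension semi-flow over the induced base, the three conclusions of Theorem \ref{teo1} will transfer essentially verbatim to yield the three conclusions of Theorem \ref{ind}.

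The first step is to observe that, by Theorem \ref{pres} applied to $(Y,\Phi)$, we have
\[
P_{\Phi}(tg)=\inf\{s\in\R: P(t\Delta_g-s\tau)\leq 0\}
\]
for every $t\in\R$. The preceding Lemma shows that $P(t\Delta_g-s\tau)\leq 0$ if and only if $P(t\overline{\Delta_g}-s\overline{\tau})\leq 0$. Let $\tilde g\colon\overline{Y}\to\R$ be any continuous function on the suspension semi-flow $(\overline{Y},\overline{\Phi})$ over $(\overline{\Sigma},\overline{\sigma})$ with roof $\overline{\tau}$ satisfying $\Delta_{\tilde g}=\overline{\Delta_g}$ (such a $\tilde g$ exists by Remark \ref{exten}). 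Applying Theorem \ref{pres} to $(\overline{Y},\overline{\Phi})$ yields
\[
P_{\Phi}(tg)=\inf\{s\in\R: P(t\overline{\Delta_g}-s\overline{\tau})\leq 0\}=P_{\overline{\Phi}}(t\tilde g).
\]
Hence the regularity of $t\mapsto P_{\Phi}(tg)$ can be read off from the regularity of $t\mapsto P_{\overline{\Phi}}(t\tilde g)$.

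Next I check that the hypotheses of Theorem \ref{teo1} are met by $(\overline{Y},\overline{\Phi})$ and $\tilde g$. Since $\Delta_g$ is locally H\"older, so is $\overline{\Delta_g}$; likewise $\overline{\tau}$ is locally H\"older. The induced roof $\overline{\tau}$ is bounded away from zero, since an orbit in the induced system always begins in the chosen cylinder $C_a$ and the return time is at least one, giving $\overline{\tau}(x)\geq \inf_{C_a}\tau>0$ (this is a mild additional assumption one makes on the inducing symbol). The base $(\overline{\Sigma},\overline{\sigma})$ is the full shift and therefore satisfies BIP. The constant $s_\infty$ for the induced problem, defined via $P(-s\overline{\tau})$, is precisely the one appearing in the statement, again by the previous Lemma applied to the zero potential. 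The finite entropy of $(\overline{Y},\overline{\Phi})$ follows from $s_\infty<\infty$. Finally, the two infimum/limit hypotheses of Theorem \ref{teo1} are exactly the displayed assumption of Theorem \ref{ind}.

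With these verifications in place, parts \ref{Part1}, \ref{Part2}, and \ref{Part3} of Theorem \ref{ind} follow directly from the corresponding parts of Theorem \ref{teo1} applied to $(\overline{Y},\overline{\Phi})$ and $\tilde g$, via the identification $P_{\Phi}(tg)=P_{\overline{\Phi}}(t\tilde g)$. In particular, the threshold $t_0=\sup\{t: P(t\overline{\Delta_g}-s_\infty\overline{\tau})\leq 0\}$ and the trichotomy real-analytic/constant at $s_\infty$ coming from Theorem \ref{teo1} transfer directly. The main obstacle is really the content of the preceding Lemma, which has already been proved: it is what legitimately converts the BIP hypothesis on the base, essential to Theorem \ref{teo1}, into the much weaker topologically mixing hypothesis used here. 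The remaining work is bookkeeping to match hypotheses and to confirm that the induced data $(\overline{\Sigma},\overline{\sigma},\overline{\tau},\overline{\Delta_g})$ fits the framework of Theorem \ref{teo1}.
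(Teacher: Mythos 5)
Your proposal is correct and takes essentially the same route as the paper: the paper proves the inducing lemma and then simply asserts that Theorem \ref{ind} follows by applying Theorem \ref{teo1} to the induced system (a full shift, hence BIP), leaving the details implicit. You have written out exactly that reduction, including the identification $P_{\Phi}(tg)=\inf\{s: P(t\overline{\Delta_g}-s\overline{\tau})\leq 0\}$ via Theorem \ref{pres} and the verification of the hypotheses of Theorem \ref{teo1} for the induced data, so no further comparison is needed.
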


\end{document}